\DeclareOldFontCommand{\bf}{\normalfont\bfseries}{\mathbf}
\newtheorem{theorem}{Theorem}[section]
\newtheorem{corollary}[theorem]{Corollary}
\newtheorem{lemma}[theorem]{Lemma}
\newtheorem{proposition}[theorem]{Proposition}
\theoremstyle{definition}
\newtheorem{definition}[theorem]{Definition}
\newtheorem{remark}[theorem]{Remark}
\newtheorem{example}[theorem]{Example}
\newcommand{\oid}[1]{\operatorname{id}}
\newcommand{\tr}[1]{\mathfrak{t}^{\pm}_{\zeta_{#1}}}
\newcommand{\ts}[1]{\mathfrak{t}^{+}_{\zeta_{#1}}}
\newcommand{\tq}[1]{\mathfrak{t}^{-}_{\zeta_{#1}}}
\newcommand\restr[2]{{
\left.\kern-\nulldelimiterspace 
#1 
\vphantom{\big|} 
\right|_{#2} 
}}
\title[A study of a recursive sequence of polynomials]{\bfseries A study of a recursive sequence of polynomials revealing weighted Catalan Numbers}
\author{Sophie Marques and Elizabeth Mrema}
\begin{document}

  \setcounter{tocdepth}{3}
\maketitle
\begin{center}
\rm e-mail: smarques@sun.ac.za

\it
Department of Mathematical Sciences, 
Stellenbosch University, \\
Stellenbosch, 7600, 
South Africa\\
\&
NITheCS (National Institute for Theoretical and Computational Sciences), \\
South Africa \\ \bigskip

\rm e-mail: mrema.elizabeth@udsm.ac.tz

\it
Department of Mathematics, University of Dar es Salaam\\
Tanzania. 
\end{center} 
    \tableofcontents

\begin{abstract}

This paper examines the recursive sequence of polynomials \(p_n(x)\), defined by \(p_0(x) = x^2 - 2\) and \(p_n(x) = p_{n-1}(x)^2 - 2\) for \(n \geq 1\). It describes the field-theoretic motivations behind this sequence, derives a recursive formula for its coefficients, and identifies invariants that uncover combinatorial connections, including links to weighted Catalan numbers.  \\ \\

\noindent \textbf{Keywords.} recursive polynomial sequences, field theory,
cyclotomic extensions, minimal polynomials, invariants, weighted Catalan numbers, Vandermonde method.
\vspace*{\fill}

\noindent \textbf{2020 Math. Subject Class.}  12F05, 12E05, 12E12, 12E10, 11R18, 05C05, 05C25, 11B83

\end{abstract}

\section*{Introduction}

Recursive polynomial sequences such as Laguerre, Chebyshev, Legendre, Jacobi, and Hermite polynomials have been extensively studied in the literature (see, for instance, \cite{aziz2020orthogonal, benjamin2009counting, doha2004construction, doha2004connection, hone2018family}). These sequences have applications in physics, statistics, and approximation theory. Although this paper does not focus on these applications, we dedicate our study to a specific recursive sequence of polynomials defined by setting:

$$p_0(x) = x^2 - 2, \quad \text{and for all} \ n \geq 1, \ p_n(x) = p_{n-1}(x)^2 - 2.$$

The polynomials \(p_n(x)\) can express the minimal polynomials of the generators of the forms \(\zeta_{2^e} + \zeta_{2^e}^{-1}\) of a codegree 2 subextensions of the cyclotomic extension \(F(\zeta_{2^e})/F\) for some \(e \in \mathbb{N}\) (see \cite[Theorem 4.1]{marques-cyclic}). This provides a field-theoretic motivation for the focus of this paper.

On one hand, we derive a recursive formula for the coefficients of each term in this sequence by leveraging the field-theoretic interpretation mentioned above (see Theorem \ref{coefficients-field}). This approach opens the door to potentially applying field-theoretic methods to compute coefficients of other polynomial recursive sequences. On the other hand, a detailed examination of these coefficients leads us to our second main result (see Theorem \ref{coefficients-ai}), where we express the coefficients \(c_{n, 2k}\) as follows:

$$c_{n,2k} = \begin{cases}
2 & \text{when } k = 0, \\
\sum\limits_{j=1}^{k} a_{j,k} 2^{2jn} & \text{otherwise}
\end{cases}$$

Here, \(a_{j,k}\) is a rational number independent of \(n\), providing us with invariants for the sequence $(p_n(x))_{n \in \mathbb{N}}$.

Given these invariants, we sought to understand their significance and explore whether they possessed interesting combinatorial properties. Leveraging the earlier computation of the coefficients \(c_{n,2k}\), we devised an algorithmic method (see Proposition \ref{vandermond}) to compute these invariants \(a_{j,k}\) using a Vandermonde approach, similar to the one employed in \cite{pereyra-vandermonde}.

Through this process, we isolated the invariant \(a_{k,k}\). Further analysis and observations led us to identify these \(a_{k,k}\) as weighted Catalan numbers, providing a novel method for their computation. However, the combinatorial significance of the remaining invariants \(a_{j,k}\) remains unclear, presenting an open question that emerges from this study.

\section*{Notation and symbols}
In this paper, $F$ denotes a field, $\overline{F}$ is one fixed algebraic closure of the field $F$,  and $e$ is a positive integer. We denote by $\zeta_{2^e}$ one primitive $2^e$ root of unity in $\overline{F}$.

In the following table, we present notation and symbols that will be utilized throughout this paper.
\begin{center} 
  \begin{tabular}{l p{11cm} }
$\mathbb{N}$ & The set of natural number including zero.\\
$\llbracket  m, n  \rrbracket$ & $\{ m, m+1,\cdots , n \}$ where $m, n \in \mathbb{N}$ such that $m \leq n$.\\

$\tr{2^e}$ & $\zeta_{2^e} \pm \zeta_{2^e}^{-1}$.\\
$o_F(\alpha)$ & The order of an element $\alpha\in \overline{F}$ over a field $F$.\\

 $\operatorname{t}_F(2^e)$ & $\left\{ \begin{array}{clll} 
 	2^e & \text{when } \operatorname{o}_F(\zeta_{2^e})>2;\\
 	2 & \text{when }  \operatorname{o}_F(\zeta_{2^e})=2;\\
 	1 & \text{when } \operatorname{o}_F(\zeta_{2^e})=1.
 \end{array} \right.$ \\
 $\nu_{2^\infty_F}^{+}$ & $\left\{ \begin{array}{clll}\operatorname{max}\{k \in \mathbb{N} | \zeta_{t_F(2^k)} +
 	\zeta_{t_F(2^k)}^{-1} \in F\} & \text{ when it exists }; \\
 	\infty & \text{ otherwise.} \end{array} \right.$\\
 $\nu_{2^\infty_F}$ & $\left\{ \begin{array}{clll} \nu_{2^\infty_F}^++1 & \text{when}\ F \ \text{has property } \ \mathcal{C}_2 \ (\text{see \cite[Definition 3.13]{marques-quadratic}}) ; \\ 
 	\nu_{2^\infty_F}^+ & \text{ otherwise. }  \end{array} \right.$ 
  \end{tabular}
\end{center}

\section{The recursive sequence of polynomials as minimal polynomials}
\subsection{Definition}
We begin this section by introducing the recursively defined sequence of polynomials that will be the focus of this paper.

\begin{definition}[Lemma]\label{recursive-polynomial}
    We define the sequence of polynomials \((p_n(x))_{n \in \mathbb{N}}\) as follows:
    \[
    p_0(x) = x^2 - 2 \quad \text{and for all} \ n \geq 1, \quad p_n(x) = p_{n-1}(x)^2 - 2.
    \]
    For any \(n, k \in \mathbb{N}\), let \(c_{n,k}\) denote the coefficient of \(x^k\) in the polynomial \(p_n(x)\).
\end{definition}

\begin{remark}\label{recursive-remark}
    Let \(n \in \mathbb{N}\). 
    \begin{enumerate}
        \item All odd coefficients of \(p_n(x)\) are zero, so that
        \[
        p_n(x) = \sum_{k=0}^{2^n} c_{n,2k} x^{2k}.
        \]
      
        \item We observe that \(p_0(0) = -2\), and for all \(n \geq 1\), \(p_n(0) = 2\). This statement can be established through a straightforward induction.  
        \item The polynomial \(p_n(x)\) has coefficients in \(\mathbb{Z}\). In this paper, we will also consider these polynomials with coefficients in an arbitrary field \(F\). More precisely, given the characteristic map \(\chi_F: \mathbb{Z} \to F\), which sends \(a\) to \(a \cdot 1_F\), and a polynomial \(p(x) = \sum_{i=0}^{n} a_i x^i\) in \(\mathbb{Z}[x]\), we consider \(p(x)\) as a polynomial in \(F[x]\) by identifying it with \(\sum_{i=0}^{n} \chi_F(a_i) x^i\).
    \end{enumerate}
\end{remark}

\begin{example}
The polynomial \( p_n(x) \) for \( n \in \{1, 2, 3, 4\} \) are as follows:
    \begin{align*}
        p_1(x) &= x^4 - 4x^2 + 2, \\
        p_2(x) &= x^8 - 8x^6 + 20x^4 - 16x^2 + 2, \\
        p_3(x) &= x^{16} - 16x^{14} + 104x^{12} - 352x^{10} + 660x^8 - 672x^6 + 336x^4 - 64x^2 + 2, \\
        p_4(x) &= x^{32} - 32x^{30} + 464x^{28} - 4032x^{26} + 23400x^{24} - 95680x^{22} + 283360x^{20} \\
        & \quad - 615296x^{18} + 980628x^{16} - 1136960x^{14} + 940576x^{12} - 537472x^{10} \\
        & \quad + 201552x^8 - 45696x^6 + 5440x^4 - 256x^2 + 2.
    \end{align*}
\end{example}

\noindent We now define a new sequence of polynomials that will ultimately coincide with the previously defined sequence.
\begin{definition}\label{recursive-q}
    We introduce a new sequence of polynomials \((q_n(x))_{n \in \mathbb{N}}\) defined recursively as follows:
    \begin{equation*}
        q_0(x) = x^2 - 2, \text{ and for all  \(n \geq 1\), }
        q_n(x) = q_{n-1}(x^2 - 2).
    \end{equation*}
\end{definition}

\subsection{Characterization of $p_n(x)$ as minimal polynomial}
The following lemma establishes the minimal polynomial of ${\ts{2^e}}$ using the previously constructed sequences of polynomials.
\begin{proposition}\label{minpn}
	Let  $n\in \mathbb{N}\cup \{0\}$. 
	The following assertions are equivalent:
	\begin{enumerate}
		\item $[F({\ts{2^e}}):F]=2^{n+1}$
		\item $\operatorname{min}({\ts{2^e}}, F)=p_n(x)-\ts{2^{\scalebox{0.5}{$\nu_{2^\infty_F}^+$}}}$
		\item $\operatorname{min}({\ts{2^e}}, F)=q_n(x)-\ts{2^{\scalebox{0.5}{$\nu_{2^\infty_F}^+$}}}$
	\end{enumerate}
When one of these assumption is satisfied, we have $e=n+(\nu_{2^\infty_F}^++1)$.
\end{proposition}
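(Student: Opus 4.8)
Since the three assertions are equivalent, I may assume \emph{(2)}: $\operatorname{min}(\ts{2^e},F) = p_n(x) - \ts{2^{\nu}}$, where I abbreviate $\nu := \nu_{2^\infty_F}^+$ (assertion \emph{(3)} is handled identically, using $q_n(x) = q_{n-1}(x^2-2)$ in place of the recursion below, which telescopes to $q_0(\ts{2^{e-n}}) = \ts{2^{e-n-1}}$ without even needing $q_n = p_n$). The plan is to evaluate this minimal polynomial at $\ts{2^e}$ in two ways and compare. Before that I would clear away degeneracies: if $\operatorname{char} F = 2$ the only $2^e$-th root of unity is $1$, so $\ts{2^e} = 0 \in F$ and $[F(\ts{2^e}):F] = 1 \neq 2^{n+1}$; hence $\operatorname{char} F \neq 2$, so $\zeta_{2^m}$ has multiplicative order exactly $2^m$, $\operatorname{t}_F(2^m) = 2^m$ for all $m$, and $\nu = \max\{k \in \mathbb{N} : \ts{2^k} \in F\}$ — in particular $\nu$ is finite, since $\ts{2^k} \in F$ for every $k$ would force $F(\ts{2^e}) = F$. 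I would fix the primitive roots compatibly, $\zeta_{2^{m-1}} = \zeta_{2^m}^2$, so that $(\ts{2^m})^2 - 2 = \zeta_{2^m}^2 + \zeta_{2^m}^{-2} = \ts{2^{m-1}}$ for all $m \geq 1$; and since $\ts{2^e} \in F(\zeta_{2^e})$ and $\zeta_{2^e}$ is a root of $x^{2^{e-1}} + 1$, I get $2^{n+1} = [F(\ts{2^e}):F] \leq 2^{e-1}$, hence $e \geq n+2$.

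Next I would run the first evaluation, which is purely formal: by induction on $k$, using $p_k = p_{k-1}^2 - 2$ and the identity just noted, one gets $p_k(\ts{2^m}) = \ts{2^{m-k-1}}$ whenever $m \geq k+1$; the base case $k = 0$ is exactly that identity, and the inductive step reads $p_k(\ts{2^m}) = p_{k-1}(\ts{2^m})^2 - 2 = (\ts{2^{m-k}})^2 - 2 = \ts{2^{m-k-1}}$. Taking $k = n$ and $m = e$ (legitimate since $e \geq n+2$) gives $p_n(\ts{2^e}) = \ts{2^{e-n-1}}$. The second evaluation is immediate: $p_n(\ts{2^e}) - \ts{2^\nu} = 0$ because $p_n(x) - \ts{2^\nu}$ is, by assumption, the minimal polynomial of $\ts{2^e}$ over $F$. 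Comparing the two, $\ts{2^{e-n-1}} = \ts{2^\nu}$.

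Finally I would show that $\ts{2^a} = \ts{2^b}$ forces $a = b$, applied with $a = e - n - 1 \;(\geq 1)$ and $b = \nu$. If $a > \nu$, then $\ts{2^a} = \ts{2^\nu} \in F$ contradicts the maximality of $\nu$ in its definition. If $a < \nu$, then $\zeta_{2^\nu}$ is a root of $x^2 - \ts{2^a}\,x + 1 = (x - \zeta_{2^a})(x - \zeta_{2^a}^{-1})$, so $\zeta_{2^\nu} \in \{\zeta_{2^a}, \zeta_{2^a}^{-1}\}$, whose multiplicative order divides $2^a < 2^\nu$ — impossible. Hence $a = \nu$, i.e. $e = n + \nu + 1 = n + (\nu_{2^\infty_F}^+ + 1)$.

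I expect the only genuine friction to be the bookkeeping around boundary and degenerate cases — ruling out $\operatorname{char} F = 2$ and $\nu = \infty$ so that the formula is even meaningful, and confirming that fixing a compatible system of primitive $2^m$-th roots does not affect the statement. The core — the short telescoping induction giving $p_n(\ts{2^e}) = \ts{2^{e-n-1}}$ together with the elementary order argument pinning down the exponent — requires nothing beyond the equivalence of \emph{(1)}--\emph{(3)} already in hand.
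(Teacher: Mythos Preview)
Your argument correctly establishes the closing sentence of the proposition --- that $e=n+(\nu_{2^\infty_F}^{+}+1)$ once one of the three conditions holds --- and does so by a clean route different from the paper's: you evaluate $p_n$ at $\ts{2^e}$ via the telescoping identity $p_k(\ts{2^m})=\ts{2^{m-k-1}}$, compare with the assumed minimal polynomial, and then pin down the exponent by an order argument. That part is fine.

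The gap is that you have not proved the proposition. The equivalence of (1), (2), (3) is the content of the statement, not a hypothesis you may invoke; your phrases ``since the three assertions are equivalent'' and ``the equivalence of (1)--(3) already in hand'' assume precisely what must be shown. The implications $(2)\Rightarrow(1)$ and $(3)\Rightarrow(1)$ are indeed trivial by degree, but $(1)\Rightarrow(2)$ is not: from $[F(\ts{2^e}):F]=2^{n+1}$ alone, your telescoping identity only tells you that $p_n(x)-\ts{2^{e-n-1}}$ annihilates $\ts{2^e}$, and you do not yet know that $e-n-1=\nu_{2^\infty_F}^{+}$, i.e.\ that this polynomial lies in $F[x]$. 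Deducing that equality from (1) requires knowing that each step in the tower $F=F(\ts{2^{\nu}})\subset F(\ts{2^{\nu+1}})\subset\cdots\subset F(\ts{2^e})$ has degree exactly $2$, which is the substantive input the paper imports from \cite[Theorem~3.2]{marques-cyclic} and packages as an induction on $n$ (with the formula for $e$ emerging as a byproduct). Your approach could be completed along these lines --- use the telescope to produce the candidate $p_{e-\nu-1}(x)-\ts{2^{\nu}}\in F[x]$ and then quote the same tower result to match degrees --- but as written it proves only the auxiliary formula, not the equivalence.
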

\begin{proof}
We start our proof by showing that \((1)\) implies \((2)\) and that \(e = n + (\nu_{2^\infty_F}^+ + 1)\).  
Assume that \([F(\ts{2^e}): F] = 2^{n+1}\). Our goal is to demonstrate that \(\operatorname{min}(\ts{2^e}, F) = p_n(x) - \ts{2^{\scalebox{0.5}{$\nu_{2^\infty_F}^+$}}}\) and \(e = n + (\nu_{2^\infty_F}^+ + 1)\). We will use an induction argument on \(n\) to establish this result.
 
For \(n = 0\), we have \([F(\ts{2^e}): F] = 2\). According to \cite[Theorem 3.2]{marques-cyclic}, we know that \(\operatorname{min}(\ts{2^e}, F) = x^2 - (\ts{2^{e-1}} + 2) = p_0(x) - \ts{2^{e-1}}\). Additionally, since \(\ts{2^{e-1}} \in F\) and \(\ts{2^e} \notin F\), we conclude that \(e = \nu_{2^\infty_F}^+ + 1\).

Assume that the statement holds for some \(n \geq 1\) over any base field. We need to prove that it also holds for \(n + 1\). We suppose that \([F(\ts{2^e}): F] = 2^{n+2}\) and we set \(M := F(\ts{2^{\scalebox{0.5}{$\nu_{2^\infty_F}^+ + 1$}}})\). By \cite[Theorem 3.2]{marques-cyclic}, the field extension \(F(\ts{2^e})/F\) can be expressed as a tower \(F(\ts{2^e}) / M / F\) and $[M: F]=2$. Thus, by the multiplicativity of the degree of a tower of field extensions, we have \([F(\ts{2^e}): M] = 2^{n+1}\).  Applying the induction hypothesis, we conclude that \(\operatorname{min}(\ts{2^e}, M) = p_n(x) - \ts{2^{\scalebox{0.5}{$\nu_{2^\infty_M}^+$}}}\) and \( e =  n+ (\nu_{2^\infty_M}^+ + 1)\). Moreover, \(\nu_{2^\infty_M}^+ = \nu_{2^\infty_F}^+ + 1\). Therefore, \( e =  n+ (\nu_{2^\infty_F}^+ + 2)\) and \(p_n(\ts{2^e}) = \ts{2^{\scalebox{0.5}{$\nu_{2^\infty_F}^+ + 1$}}}\). Squaring this equation, we obtain \(p_n(\ts{2^e})^2 = \ts{2^{\scalebox{0.5}{$\nu_{2^\infty_F}^+$}}} + 2\), or equivalently, \(p_{n+1}(\ts{2^e}) -\ts{2^{\scalebox{0.5}{$\nu_{2^\infty_F}^+$}}}= 0\). Given that \(\ts{2^{\scalebox{0.5}{$\nu_{2^\infty_F}^+$}}} \in F\) by the definition of \(\nu_{2^\infty_F}^+\), and that \(p_{n+1}(x) - \ts{2^{\scalebox{0.5}{$\nu_{2^\infty_F}^+$}}}\) has degree \(2^{n+2}\), while \([F(\ts{2^e}): F] = 2^{n+2}\) by assumption, we deduce that \(p_{n+1}(x) - \ts{2^{\scalebox{0.5}{$\nu_{2^\infty_F}^+$}}} = \operatorname{min}(\ts{2^e}, F)\). This completes the induction.

$(1) \Longrightarrow (3)$
The proof follows similarly by induction on \(n\), using the same approach as in the implication \((1) \Longrightarrow (2)\). The only distinction lies in the transmission step of the induction, where we utilize the tower \(F(\ts{2^e}) / F(\ts{2^{e-1}}) / F\), apply the induction hypothesis to \(F(\ts{2^{e-1}}) / F\)  and use the fact that $\ts{2^{e-1}}= {\ts{2^{e}}}^2-2$ instead.  

$(2) \Longrightarrow (1)$ and $(3) \Longrightarrow (1)$
These implications follow directly since both \(p_n(x) - \ts{2^{\scalebox{0.5}{$\nu_{2^\infty_F}^+$}}}\) and \(q_n(x) - \ts{2^{\scalebox{0.5}{$\nu_{2^\infty_F}^+$}}}\) have degree \(2^{n+1}\). Thus, the proof is complete.
\end{proof}

\begin{remark}\label{rktp}
From Proposition \ref{minpn}, we deduce the following statements:
	\begin{enumerate}
		\item We have established that \(\operatorname{min}(\ts{2^e}, F) = p_{e - (\nu_{2^\infty_F}^+ + 1)}(x) - \ts{2^{\scalebox{0.5}{$\nu_{2^\infty_F}^+$}}}\).
		\item In the special case where \(F = \mathbb{Q}\), we have \(\nu_{2^\infty_F}^+ = 2\) and \(\ts{2^{\scalebox{0.5}{$\nu_{2^\infty_F}^+$}}} = 0\). Consequently, \(\operatorname{min}(\ts{2^{n+3}}, \mathbb{Q}) = p_n(x)\), and \(p_n(x)\) is an irreducible polynomial over \(\mathbb{Q}\), for all \(n \in \mathbb{N}\).
		\item We have shown that the polynomials \(p_n(x)\) and \(q_n(x)\) are equal for all \(n \in \mathbb{N}\).
	\end{enumerate}
\end{remark}
\begin{corollary}\label{minimal-negative}
 We have $$ \operatorname{min}(\tq{2^e}, F)=\begin{cases}
		p_{e-(\nu_{2^\infty_F}^++1)}(x^2+2)-\ts{2^{\scalebox{0.5}{$\nu_{2^\infty_F}^+$}}}   &  \text{if} \ e>\nu_{2^\infty_F}^++1 \\
		x^2-(\ts{2^{\scalebox{0.5}{$\nu_{2^\infty_F}^+$}}}-2) & \text{if} \ e=\nu_{2^\infty_F}^++1.
	\end{cases} $$ 

\end{corollary}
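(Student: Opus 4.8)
The plan is to obtain $\operatorname{min}(\tq{2^e},F)$ by squaring $\tq{2^e}$ and reducing to the minimal polynomial of $\ts{2^{e-1}}$, which is already available from Proposition~\ref{minpn} (equivalently Remark~\ref{rktp}(1)). The starting point is the algebraic identity
\[
(\tq{2^e})^2 + 2 \;=\; \ts{2^{e-1}},
\]
obtained by expanding $(\zeta_{2^e}-\zeta_{2^e}^{-1})^2=\zeta_{2^e}^{2}+\zeta_{2^e}^{-2}-2$ and using that $\zeta_{2^e}^2$ is a primitive $2^{e-1}$-th root of unity, so that after relabelling one may take $\zeta_{2^{e-1}}=\zeta_{2^e}^2$. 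In particular $\tq{2^e}$ is a root of the polynomial obtained by substituting $x^2+2$ into $\operatorname{min}(\ts{2^{e-1}},F)$, which lies in $F[x]$.

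Next I would split according to whether $\ts{2^{e-1}}\in F$. If $e=\nu_{2^\infty_F}^++1$, then $e-1=\nu_{2^\infty_F}^+$ and $\ts{2^{e-1}}=\ts{2^{\nu_{2^\infty_F}^+}}\in F$ by the definition of $\nu_{2^\infty_F}^+$; the displayed identity then says $\tq{2^e}$ is a root of $x^2-(\ts{2^{\nu_{2^\infty_F}^+}}-2)\in F[x]$, and all that remains is the irreducibility of this quadratic over $F$, i.e. $\tq{2^e}\notin F$. If instead $e>\nu_{2^\infty_F}^++1$, then $e-1\ge\nu_{2^\infty_F}^++1$, so $\ts{2^{e-1}}$ has degree $>1$ over $F$ and Remark~\ref{rktp}(1), applied with $e-1$ in place of $e$, expresses $\operatorname{min}(\ts{2^{e-1}},F)$ explicitly in terms of the polynomials $p_m$; substituting $x^2+2$ then yields a monic $g(x)\in F[x]$ of degree $2^{\,e-\nu_{2^\infty_F}^+}$ having $\tq{2^e}$ as a root, which is the candidate for $\operatorname{min}(\tq{2^e},F)$.

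To finish the case $e>\nu_{2^\infty_F}^++1$ I would run a degree count: on the one hand $[F(\tq{2^e}):F]\le\deg g$ since $\tq{2^e}$ is a root of $g$; on the other, the displayed identity gives $F(\ts{2^{e-1}})=F\big((\tq{2^e})^2+2\big)\subseteq F(\tq{2^e})$, so $[F(\tq{2^e}):F]\ge[F(\ts{2^{e-1}}):F]$, and Remark~\ref{rktp}(1) evaluates the latter to $\tfrac12\deg g$. Thus the degree is either $\deg g$ or $\tfrac12\deg g$, and everything reduces to excluding the smaller value, i.e. to showing $\tq{2^e}\notin F(\ts{2^{e-1}})$; in the case $e=\nu_{2^\infty_F}^++1$ the corresponding final point is exactly $\tq{2^e}\notin F$. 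This exclusion --- equivalently, that $(\tq{2^e})^2=\ts{2^{e-1}}-2$ is \emph{not} a square in $F(\ts{2^{e-1}})$ (respectively, that $\ts{2^{\nu_{2^\infty_F}^+}}-2$ is not a square in $F$) --- is the main obstacle. I would settle it by appealing to the structural description of the tower $F(\zeta_{2^e})/F$ and of the degrees $[F(\tr{2^e}):F]$ in \cite{marques-cyclic}, the source of Proposition~\ref{minpn}, which pins down precisely when the passage from $\ts{2^{e-1}}$ to $\tq{2^e}$ is a genuine quadratic step; granting that, the degree count closes both cases and delivers the stated minimal polynomial.
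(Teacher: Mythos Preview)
Your approach matches the paper's: both use the identity $(\tq{2^e})^2+2=\ts{2^{e-1}}$ to exhibit $\tq{2^e}$ as a root of $p(x^2+2)$ with $p=\operatorname{min}(\ts{2^{e-1}},F)$, and then invoke Proposition~\ref{minpn} for the explicit form of $p$. The paper's proof is terser and does not spell out the degree/irreducibility step you flag as the main obstacle (it simply asserts that ``the result follows''), so your careful degree count and appeal to \cite{marques-cyclic} are a refinement of the same argument rather than a different route.
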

\begin{proof}
We have that $\tq{2^e}$ is a root of the polynomial $p(x^2+2)$ where $p(x)=\operatorname{min}(\ts{2^{e-1}}, F)$. Indeed, ${\tq{2^e}}^2+2=(\zeta_{2^e}-\zeta_{2^e}^{-1})^2+2=\ts{2^{e-1}}$.  That implies that $p({\tq{2^e}}^2+2)=p(\ts{2^{e-1}})=0$. Thus the result follows trivially when $e=\nu_{2^\infty_F}^++1$ and follows from Proposition \ref{minpn} otherwise.
\end{proof}

\section{Computing the coefficient of $p_n(x)$ using field theory}
In the following result, we compute the coefficients of the recursive polynomial $p_n(x)$ using field theory.
\begin{theorem}\label{coefficients-field}
 
The coefficients $c_{n, 2(2^{n}-j)}$ of $p_n(x)$ are defined recursively on $j$ from $0$ to $2^{n}$ as follows: $c_{n, 2^{n+1}}=1, \ 
	\text{and for all} \ j \in \llbracket  1, 2^{n} \rrbracket,$
	$$c_{n, 2(2^{n}-j)} = -\sum \limits^{2^{n}}_{i=2^{n}-j+1}\binom{2i}{j+i-2^{n}}c_{n,2i}.
	$$
\end{theorem}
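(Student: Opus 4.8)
The plan is to work directly with the defining recursion $p_n(x) = p_{n-1}(x)^2 - 2$, viewed through the substitution that expresses $p_n$ as a polynomial in $y = x^2$. Writing $P_n(y) := p_n(x)$ with $y = x^2$, Remark~\ref{recursive-remark}(1) guarantees this is legitimate, and the recursion becomes $P_n(y) = P_{n-1}(y)^2 - 2$ only for $n \geq 1$ where the argument stays a polynomial in $y$; more usefully, from the $q_n$ description (Definition~\ref{recursive-q}, together with Remark~\ref{rktp}(3) giving $p_n = q_n$) we have $p_n(x) = p_{n-1}(x^2 - 2)$, i.e. $P_n(y) = P_{n-1}(y^2 - 2)$. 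The coefficients $c_{n,2k}$ are exactly the coefficients of $P_n$, so $P_n(y) = \sum_{k=0}^{2^n} c_{n,2k}\, y^k$, and I want to prove the stated triangular recursion for these in the reindexed form $j \mapsto c_{n,2(2^n - j)}$ (so $j$ counts the drop in degree from the top).

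First I would fix $n$ and substitute $P_n(y) = P_{n-1}(y^2 - 2)$ is awkward because it lowers $n$; instead the cleaner route is to use $P_n(y)^2 = P_{n+1}(y) + 2$, but that still relates different indices. The genuinely self-contained identity is the one the theorem is really exploiting: $P_n$ is a \emph{specific} degree-$2^n$ polynomial determined (up to the details) by being related to a Chebyshev-type iterate, and one can pin down its coefficients by imposing a single polynomial identity it must satisfy. Concretely, I would use that $2\cos(2^{n+1}\theta) = P_n(2\cos(2\theta))$ — equivalently, setting $y = t + t^{-1}$ one gets $P_n(y) = t^{2^{n+1}} + t^{-2^{n+1}}$, which follows by an immediate induction from $P_0(y) = y^2 - 2 = t^2 + t^{-2}$ and the squaring recursion. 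Then substitute $y = t + t^{-1}$ into $P_n(y) = \sum_k c_{n,2k} y^k$ and compare: $(t + t^{-1})^k = \sum_{i} \binom{k}{i} t^{k - 2i}$, so the coefficient of $t^{2^{n+1}}$ on the right must be $1$ and the coefficient of each $t^{2\ell}$ for $0 < 2\ell < 2^{n+1}$ must vanish. Extracting the coefficient of $t^{2(2^n - j)}$ from $\sum_{k} c_{n,2k}(t + t^{-1})^k = \sum_k c_{n,2k} \sum_i \binom{k}{i} t^{k-2i}$ and solving for the top-contributing term gives precisely the claimed formula after the index bookkeeping: the term $k = 2i$ contributes $\binom{2i}{i - (2^n - j)}$ with $i$ ranging over $i \geq 2^n - j$, so isolating $c_{n, 2(2^n - j)}$ (which appears with $i = 2^n - j$, binomial $\binom{2(2^n-j)}{0} = 1$) against the vanishing of the coefficient yields the recursion as stated.

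The steps in order: (i) prove the Laurent-polynomial identity $P_n(t + t^{-1}) = t^{2^{n+1}} + t^{-2^{n+1}}$ by induction on $n$ using the squaring recursion; (ii) expand the left side via the binomial theorem on $(t+t^{-1})^k$; (iii) collect the coefficient of $t^{2(2^n - j)}$ for $0 \le j \le 2^n$, noting the right-hand side contributes $1$ when $j = 0$ and $0$ when $1 \le j \le 2^n$; (iv) observe only even powers $k = 2i$ of $(t+t^{-1})$ can produce $t^{2(2^n-j)}$, and that $c_{n, 2i} \ne 0$ forces $i \le 2^n$, giving the sum over $i$ from $2^n - j$ to $2^n$; (v) separate out the $i = 2^n - j$ term (coefficient $c_{n,2(2^n-j)}$, binomial $1$) and rearrange to get the recursion, with the base case $c_{n, 2^{n+1}} = 1$ coming from the top-degree term.

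The main obstacle is purely the index juggling in step (iii)–(v): making sure the binomial $\binom{2i}{j + i - 2^n}$ in the statement matches $\binom{2i}{i - (2^n - j)}$ coming out of $(t+t^{-1})^{2i} = \sum_m \binom{2i}{m} t^{2i - 2m}$ with $2i - 2m = 2(2^n - j)$, i.e. $m = i - 2^n + j$ — which it does — and verifying the summation bounds $2^n - j + 1 \le i \le 2^n$ are exactly those for which the binomial argument is a legitimate nonnegative integer not exceeding $2i$ while $c_{n,2i}$ is among the already-determined (higher-degree) coefficients. I would also double-check the edge case $j = 2^n$ (the constant term, known to equal $2$ by Remark~\ref{recursive-remark}(2)) is consistent with the formula, as a sanity check rather than as part of the proof. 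An alternative, if one prefers to avoid the trigonometric substitution entirely, is to induct on $n$ using $P_n(y) = P_{n-1}(y^2-2)$ and the Cauchy product for $P_{n-1}^2$, but that produces a messier two-layer convolution and obscures the clean single identity above, so I would present the $t + t^{-1}$ argument.
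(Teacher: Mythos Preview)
Your argument is correct and the computational backbone---expand $p_n(t+t^{-1})$ via the binomial theorem, compare the coefficient of $t^{2(2^n-j)}$, and solve the resulting upper-triangular system---is exactly what the paper does. The genuine difference lies in how the key identity
\[
p_n\bigl(t+t^{-1}\bigr)=t^{2^{n+1}}+t^{-2^{n+1}}
\]
is established. You obtain it by a one-line induction from the squaring recursion $p_n=p_{n-1}^2-2$ (using $(u+u^{-1})^2-2=u^2+u^{-2}$), which is entirely elementary and self-contained. The paper instead derives the equivalent statement $x^{2^{n+1}}p_n(x+x^{-1})=x^{2^{n+2}}+1$ by recognising the left-hand side as a monic polynomial of degree $2^{n+2}$ vanishing at $\zeta_{2^{n+3}}$, hence equal to the cyclotomic minimal polynomial $x^{2^{n+2}}+1$; this invokes Remark~\ref{rktp} and the degree of $\mathbb{Q}(\zeta_{2^{n+3}})/\mathbb{Q}$. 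Your route is shorter and avoids the field-theoretic input; the paper's route is what justifies the section title and illustrates the broader theme that minimal-polynomial identities can be mined for coefficient recursions.

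One notational slip to fix: you introduce $P_n(y)=p_n(x)$ with $y=x^2$, which makes $P_0(y)=y-2$, but then write ``$P_0(y)=y^2-2=t^2+t^{-2}$'' and later speak of ``even powers $k=2i$ of $(t+t^{-1})$''. These lines only make sense for $p_n$, not $P_n$. The cleanest presentation is to drop $P_n$ entirely and run the induction and expansion directly for $p_n(t+t^{-1})=\sum_{i=0}^{2^n} c_{n,2i}(t+t^{-1})^{2i}$, which is what your index computation in steps~(iii)--(v) already assumes.
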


\begin{proof}
	
	We recall that $[\mathbb{Q}(\zeta_{2^{n+3}}):\mathbb{Q}]=2^{n+2}$. 
	Moreover, by Remark \ref{rktp}, $\operatorname{min}(\ts{2^{n+3}}, \mathbb{Q})=p_n(x)$ and $[\mathbb{Q}(\ts{2^{n+3}}):\mathbb{Q}]=\operatorname{deg}(p_n(x))=2^{n+1}$.
	
Now, let us define $r(x):=x^{2^{n+1}}p_n\big(x+ \frac{1}{x}\big)$. It is clear that $r(x)$ is a polynomial over $\mathbb{Q}$, and $r(\zeta_{2^{n+2}})=\zeta_4p_n(\ts{2^{n+2}})=0$.
	Since $[\mathbb{Q}(\zeta_{2^{n+3}}):\mathbb{Q}]=2^{n+2}$ 
	
	and $\operatorname{deg}(\operatorname{min}(\zeta_{2^{n+3}}, \mathbb{Q}))= \operatorname{deg} (r(x))= 2^{n+2}$,
	we can deduce that $r(x)=\operatorname{min}(\zeta_{2^{n+3}}, \mathbb{Q})$.

	By \cite[Corollary 4.2 ]{marques-cyclic}, we know that  $\operatorname{min}(\zeta_{2^{n+3}}, \mathbb{Q})=x^{2^{n+2}}+1$. 
	
	As a result, we get that
	\begin{align}\label{eq2}
		r(x)= x^{2^{n+1}}\sum \limits^{2^{n}}_{i=0}c_{n,2i}\left(x+\frac{1}{x}\right)^{2i}=x^{2^{n+2}}+1  
	\end{align} 
	We have

	\begin{alignat*}{2}
	 x^{2^{n+1}}\sum \limits^{2^{n}}_{i=0}c_{n,2i}\left(x+\frac{1}{x}\right)^{2i}
	&
		= \sum \limits^{m}_{i=0}\sum \limits^{2i}_{k=0}\binom{2i}{k}c_{n,2i}x^{2(k-i+m)}\\
		&= \sum \limits^{m}_{i=0}\sum \limits^{i+m}_{d=m-i}\binom{2i}{d+i-m}c_{n,2i}x^{2d}\\
	&=\sum \limits^{2m}_{d=0}\sum \limits^{m}_{i=|m-d|}\binom{2i}{d+i-m}c_{n,2i}x^{2d}
	\end{alignat*}

	$	\text{where} \ m=2^{n} , \ d = k-i+m$ and the last equality is obtained by exchanging the sums in $d$ and $i$.
	
	Thus, Equation (\ref{eq2}) becomes 
	\begin{align}\label{eq3}
		\sum \limits^{2m}_{d=0}\sum \limits^{m}_{i=|m-d|}\binom{2i}{d+i-m}c_{n,2i}x^{2d} =x^{4m}+1 
	\end{align}
	For any $i\in \llbracket  0 , 2^{n} \rrbracket,$ and $d\in \llbracket  0 , 2^{n+1} \rrbracket,$ we set $b(i,d) := \binom{2i}{d+i-m}$. Comparing the coefficients of the two polynomials and using the symmetry of the binomial coefficients, we find that solving Equation (\ref{eq3}) for the $c_{n, 2j}$'s is equivalent to solving the matrix equation:

	\[
\small
	\begin{bmatrix}
		1 & b(1,m) & b(2,m) & \cdots  & b(m-1, m) & b(m,m)\\
		0 & 1 & b(2, m-1) & \cdots & b(m-1, m-1) & b(m,m-1)\\
		0 &0 & 1 & \cdots  & b(m-1, m-2) & b(m,m-2)\\
		\vdots & \vdots & \vdots & \vdots  & \vdots & \vdots  \\
		0 &  0 & 0 &\cdots   & b(m-1, 2) & b(m, 2)\\
		0 & 0 & 0&\cdots & 1 & b(m, 1)\\
		0 & 0 & 0& \cdots  &0 & 1\\
	\end{bmatrix}
	\begin{bmatrix}
		c_{n,0} \\
		c_{n, 2}\\
		c_{n, 6}\\
		\vdots\\
		c_{n, 2(m-2)}\\
		c_{n, 2(m-1)}\\
		c_{n, 2m}\\
	\end{bmatrix}
	=
	\begin{bmatrix}
		0 \\
		0 \\
		0\\
		\vdots \\
		0 \\
		0\\
		1\\
	\end{bmatrix}
	\]
	Since the system above is in echelon form, we can use back substitution to solve for the system and conclude the proof of the theorem. 
\end{proof}
\begin{remark}
From Theorem \ref{coefficients-field}, for any $n \in \mathbb{N}$, we obtain
\begin{enumerate}
\item $c_{n, 2(2^n-1)} = -2^{n+1}$,  
\item $c_{n, 2 (2^n-2)}= 2^{n} ( 2^{n+1}-3)$, for any $n \geq 1$, 
\item $c_{n, 2 (2^n-3)}=2^{n+1}(2^{n+1}-3)-\frac{2^n(2^{n+1}-1)(2^{n+1}-2)}{3}$, for any $n\geq 2$,
\item $c_{n, 2 (2^n-4)} =2^{n-1} (2^n-3) \left(\frac{4(2^{n+1}-1)(2^{n+1}-2)}{3}-  (2^{2(n+1)}-9)\right),$ for any $n \geq 3$.







\item $2 = -\sum \limits^{2^{n}}_{i=1}b(i,2^n)c_{n,2i}$, for any $n \geq 1$, since $c_{n, 0} =2$, by Remark \ref{recursive-remark}, (2).
\end{enumerate}
\end{remark}

We now proceed by characterizing the coefficients of the polynomial sequence \( p_n(x) \), this time relying on its recursive definition rather than field-theoretic techniques. To achieve this, we express the coefficients of \( p_n(x) \) in terms of those of \( p_{n-1}(x) \).

We begin by introducing a key function that will facilitate the description of these coefficients.

\begin{definition}

We define the map \( \epsilon: \mathbb{N} \to \{0, 1\} \) as follows:
\[
    \epsilon(k) =
    \begin{cases}
        0, & \text{if } k \text{ is odd} \\
        1, & \text{if } k \text{ is even}.
    \end{cases}
\]
\end{definition} 
\begin{lemma}\label{coefficients}
	Let $n\geq 1$ and $k\in \llbracket 0, 2^{n} \rrbracket$. The coefficients $c_{n, 2k}$ of $p_n(x)$ are defined recursively as follows: $$c_{n,0}=c_{n-1, 0}^2-2 $$
	and
	$$c_{n,2k}=\epsilon(k)(c_{n-1,k})^2+2\sum\limits^{\tiny \text{$\operatorname{min}(\lfloor\frac{k-1}{2}\rfloor,2^{n-1}-1)$}}_{\tiny \text{$s=\operatorname{max}(0, k-2^{n-1})$}}c_{n-1,2s}c_{n-1, 2(k-s)}, \text{ for all } \  k\in \llbracket 1,  2^{n} \rrbracket $$ 
\end{lemma}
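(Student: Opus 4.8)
The plan is to expand the recursion $p_n(x) = p_{n-1}(x)^2 - 2$ directly in terms of coefficients and match powers of $x$. Write $p_{n-1}(x) = \sum_{j=0}^{2^{n-1}} c_{n-1,2j} x^{2j}$, using Remark \ref{recursive-remark}(1) that only even powers appear. Squaring, $p_{n-1}(x)^2 = \sum_{j,\ell} c_{n-1,2j} c_{n-1,2\ell} x^{2(j+\ell)}$, so the coefficient of $x^{2k}$ in $p_n(x)$ is $\sum_{j+\ell = k} c_{n-1,2j} c_{n-1,2\ell}$ for $k \geq 1$, and the same sum minus $2$ for $k = 0$. This immediately gives $c_{n,0} = c_{n-1,0}^2 - 2$.

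The remaining work is purely bookkeeping on the index set $\{(j,\ell) : j + \ell = k,\ 0 \le j,\ell \le 2^{n-1}\}$. First I would reparametrize by $s = \min(j,\ell)$, splitting the sum into the ``diagonal'' term $j = \ell = k/2$ (which occurs precisely when $k$ is even, accounting for the factor $\epsilon(k)$ and the square $(c_{n-1,k})^2 = (c_{n-1, 2(k/2)})^2$) and the ``off-diagonal'' pairs $j < \ell$, each counted twice, yielding the factor $2$. Then I would determine the exact range of $s$: the constraint $j,\ell \ge 0$ with $j+\ell = k$ forces $s \ge 0$, but the upper bound $\ell = k - s \le 2^{n-1}$ forces $s \ge k - 2^{n-1}$, so the lower limit is $\max(0, k - 2^{n-1})$; the condition $s < k - s$ (strict, to avoid double-counting the diagonal) gives $s \le \lceil k/2 \rceil - 1 = \lfloor (k-1)/2 \rfloor$, and the constraint $s \le 2^{n-1}$ (and $s = j \le 2^{n-1}-1$ since $s$ is strictly the smaller) gives the upper limit $\min(\lfloor (k-1)/2 \rfloor, 2^{n-1}-1)$, matching the statement exactly.

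The only mildly delicate point — and the one I would be most careful about — is verifying that the stated summation bounds are correct in the edge cases, particularly for $k$ near $2^n$ (where $\max(0, k - 2^{n-1})$ is active and the sum can be empty or a single term) and for small $k$; one should check, e.g., $k = 2^n$ gives an empty off-diagonal sum with diagonal term $(c_{n-1,2^n})^2 = 1$, consistent with $c_{n,2^{n+1}} = 1$. I do not expect any genuine obstacle here: the result is a reindexing identity, and once the symmetric square is written out, the claimed formula follows by carefully collecting the diagonal and paired off-diagonal contributions and reading off the index ranges. I would close by noting the $k=0$ case was handled separately and the identification $c_{n-1,k}$ in the statement means $c_{n-1, 2(k/2)}$ when $k$ is even (and the term is suppressed by $\epsilon(k)=0$ when $k$ is odd).
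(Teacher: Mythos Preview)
Your proposal is correct and follows essentially the same approach as the paper: expand $p_{n-1}(x)^2$ using Remark~\ref{recursive-remark}(1), separate the square into diagonal terms (yielding the $\epsilon(k)(c_{n-1,k})^2$ contribution) and off-diagonal cross terms (yielding the factor~$2$), then reindex and read off the bounds on~$s$. The paper's proof is slightly terser about justifying the summation limits, while you spell out the edge cases more carefully, but the underlying argument is identical.
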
 
\begin{proof}
	We set $M:=\operatorname{min}(\lfloor\frac{k-1}{2}\rfloor,2^{n-1}-1)$ and $m:= \operatorname{max}(0, k-2^{n-1})$. By Definition \ref{recursive-polynomial}, we know that $p_{n}(x)=p_{n-1}(x)^2-2$. From this equation, we can deduce
	\begin{alignat*}{2}
		p_n(x) &=\left(\sum\limits^{2^{n-1}}_{k=0}c_{n-1,2k}x^{2k}\right)^2-2\\
		&=\sum\limits^{2^{n-1}}_{k=0}(c_{n-1,2k})^2x^{2(2k)}+2\sum\limits^{2^{n-1}-1}_{s=0}\sum\limits^{2^{n-1}}_{t=s+1}c_{n-1,2s}c_{n-1,2t}x^{2(s+t)}-2  \\
		& =\sum\limits^{2^{n-1}}_{k=0}(c_{n-1,2k})^2x^{2(2k)}+2\sum\limits^{2^{n}-1}_{k=1}\sum\limits^{M}_{\tiny \text{$s=m$}}c_{n-1,2s}c_{n, 2(k-s)}x^{2k}-2.
	\end{alignat*}
	
Recalling that $$\epsilon(k)=
	\begin{cases}
	0 & \text{if } 2\nmid k \\
	1 & \text{if }  2\mid k \\
	\end{cases},$$
we obtain that
	\begin{alignat*}{2}
		\sum\limits^{2^n}_{k=0}c_{n,2k}x^{2k}&=\sum\limits^{2^{n}}_{k=0}\epsilon(k)(c_{n-1,k})^2x^{2k}+2\sum\limits^{2^{n}-1}_{k=1}\sum\limits^{M}_{s=m}c_{n-1,2s}c_{n-1, 2(k-s)}x^{2k}-2\\
		&=(c_{n-1,0})^2-2+\sum\limits^{2^n}_{k=1}\left(\epsilon(k)(c_{n-1,k})^2+2\sum\limits^{M}_{s=m}c_{n-1,2s}c_{n-1, 2(k-s)}\right)x^{2k}. 
	\end{alignat*}
	By comparing the coefficients of the two polynomials, we can conclude that the desired result holds.  
\end{proof}

We also observe that by applying the Taylor expansion to \( p_n(x) \) around \( x=0 \), the coefficients can be expressed in terms of the derivatives of \( p_n(x) \) evaluated at \( x=0 \).

Specifically, for \( k \in \llbracket 0, 3 \rrbracket \), the coefficients can be written as:
\[
    c_{n,0} = p_n(0), \quad c_{n,2} = \frac{p_n^{(2)}(0)}{2!}, \quad c_{n,4} = \frac{p_n^{(4)}(0)}{4!}, \quad c_{n,6} = \frac{p_n^{(6)}(0)}{6!}.
\]

Here, \( p_n^{(k)}(x) \) denotes the \( k \)-th derivative of \( p_n(x) \).

To compute these coefficients, we begin with the recursive definition $$p_n(x) = (p_{n-1}(x))^2 - 2 .$$ By applying the chain rule, we can derive expressions for the derivatives of \( p_n(x) \). 

For example, the first and second derivatives are:
\[
    p_n'(x) = 2 p_{n-1}(x) p_{n-1}'(x), \quad p_n''(x) = 2(p_{n-1}'(x))^2 + 2p_{n-1}(x)p_{n-1}''(x).
\]
Evaluating the second derivative at \( x = 0 \), we find that \( c_{1,2} = -4 \). 

Moreover, we observe that \( p_n''(0) = 2^2 p_{n-1}''(0) \), since all odd coefficients vanish and the constant term remains 2 for any \( n \geq 1 \). Therefore, $c_{n,2} = 2^2 c_{n-1,2}$ for $n\geq 1$, and we deduce that
	$$c_{n,2}=-2^{2n}.$$

Similarly, by considering the fourth derivative of \( p_n(x) \) and evaluating it at \( x = 0 \), we obtain \( c_{n,4} = 2^{4n-4} + 4c_{n-1,4} \). Introducing the sequence \( a_n = c_{n,4} - 4c_{n-1,4} \), we can express this as a recursive formula: \( a_n = 2^4 a_{n-1} \), with \( a_2 = 2^4 \). This recurrence relation is a geometric sequence, which allows us to derive the following recursive relation for \( c_{n,4} \): 
\[
c_{n,4} - 20 c_{n-1,4} + 6 c_{n-2,4} = 0.
\]
The associated characteristic polynomial is \( x^2 - 20x + 64 = (x - 16)(x - 4) = 0 \). Thus, the solution for \( c_{n,4} \) is of the form:
\[
c_{n,4} = a 4^{n-1} + b 16^{n-1}
\]
for some constants \( a \) and \( b \). Using the initial conditions \( c_{1,4} = a + b = 1 \) and \( c_{2,4} = 4a + 16b = 20 \), we solve these equations to find:
\[
c_{n,4} = -\frac{1}{12} 2^{2n} + \frac{1}{12} 2^{4n}.
\]

Generalizing this pattern, we derive a similar formula for \( c_{n,6} \):
\[
c_{n,6} = -\frac{1}{90} 2^{2n} + \frac{1}{72} 2^{4n} - \frac{1}{360} 2^{6n}.
\]

Having established this pattern, we formulated a general conjecture. The following theorem captures this result, and we proved our conjecture using Lemma \ref{coefficients}.

\begin{theorem}\label{coefficients-ai} 
	Let $n\geq 1$ and $k \in\llbracket 0 , 2^n\rrbracket$. The coefficients $c_{n, 2k}$ of $p_n(x)$ are defined as follows: 
	
	$$c_{n,2k}=\left\{
	\begin{array}{cl}
		2 & \text{when } k=0, \\
		\sum\limits^{k}_{j=1}a_{j,k}2^{2jn} & \text{otherwise}
	\end{array}
	\right.$$

The coefficients \( a_{j,k} \) are defined recursively with respect to \( k \) and \( j \), starting from \( k \) down to 1, independently of \( n \), as follows:

	$$a_{j,k} = \left\{ \begin{array}{lll}
-1,  \text{ when $(j,k)= (1,1)$} \\
	b_j  w_0(j,k), \text{ when $j\in\llbracket  2, k \rrbracket$ and $k\geq j$,}\\
			 \sum\limits_{l=2}^{k} \left( 2^{-2l} w_k(l,k) - a_{l,k}  \right)  2^{2 \eta_k(l-1)}  , \text{ when $j=1$ and $k\geq 2$.}
	\end{array}\right.$$
	where 
	\begin{align*} 
	&\eta_k= \lceil log_2(k) \rceil,\\
	&b_j = 2^{-2}(2^{2(j-1)}-1)^{-1},\\
	&w_{l_k}(j,k)=  \epsilon (k) \epsilon (j)a_{\frac{j}{2},	\frac{k}{2}}^2+{u}(j,k)+ 2v_{l_k}(j,k)\\
\end{align*}
with
	\begin{align*} &u(j,k) =\left\{ \begin{array}{lll}  2\sum_{l= \operatorname{max}(1, j-\frac{k}{2})}^{\operatorname{min}( \lfloor\frac{j-1}{2}\rfloor, \frac{k}{2}-1)} \epsilon (k) a_{l,\frac{k}{2}}a_{j-l, \frac{k}{2}}  & \text{ when } j\geq 3 \text{ and } j \leq k-1 \\
		0 & \text{ otherwise,}
	\end{array}
	\right.\\
	&v_{l_k}(j,k)= \sum\limits^{\lfloor\frac{k-1}{2}\rfloor}_{s=u_{l_k}} \sum_{r=\operatorname{max}(1,j-k+s)}^{\operatorname{min}(j-1,s)}a_{r,s}a_{j-r, k-s}
	\end{align*}
for any $l_k \in \llbracket 0, k \rrbracket$, and $u_{l_k}=\left\{ \begin{array}{lll} 0, \text{ when $l_k=0$} \\
	k-2^{\eta_k -1} ,\text{ when $l_k=k$.} 
	\end{array} \right. $
\end{theorem}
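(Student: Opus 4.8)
The plan is to prove the closed form for $c_{n,2k}$ and the recursion for the $a_{j,k}$ simultaneously, by a double induction: an outer induction on $k$ (from $k=0$ upward), and for the case analysis defining $a_{j,k}$ an inner downward recursion on $j$ from $k$ to $1$. The base cases $k=0$ (where $c_{n,0}=2$ for $n\geq 1$ by Remark \ref{recursive-remark}(2)) and $k=1$ (where $c_{n,2}=-2^{2n}$, already computed in the text, forcing $a_{1,1}=-1$) are immediate; $k=2,3$ serve as useful sanity checks against the displayed formulas for $c_{n,4}$ and $c_{n,6}$.

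The engine of the argument is Lemma \ref{coefficients}: for $k\in\llbracket 1,2^n\rrbracket$,
\[
c_{n,2k}=\epsilon(k)(c_{n-1,k})^2+2\sum_{s=\max(0,k-2^{n-1})}^{\min(\lfloor\frac{k-1}{2}\rfloor,\,2^{n-1}-1)}c_{n-1,2s}\,c_{n-1,2(k-s)}.
\]
First I would substitute the inductive closed forms $c_{n-1,2s}=\sum_j a_{j,s}2^{2j(n-1)}$ (and the $k=0$ exception where $c_{n-1,0}=2$) into the right-hand side and expand. Each product $c_{n-1,2s}c_{n-1,2(k-s)}$ becomes a sum over pairs $(r,j-r)$ of terms $a_{r,s}a_{j-r,k-s}2^{2j(n-1)}$; likewise $\epsilon(k)(c_{n-1,k})^2$ contributes the square terms. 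Collecting by the power $2^{2j(n-1)}$ yields $c_{n,2k}=\sum_{j} W_j\, 2^{2j(n-1)}$ where $W_j$ is exactly the quantity $w_{l_k}(j,k)$ in the statement (with $l_k$ chosen so that the summation range $u_{l_k}$ matches the $\max/\min$ bounds of Lemma \ref{coefficients} once $n-1\geq \eta_k$, i.e. once $2^{n-1}\geq k$). The key bookkeeping point is that the bounds $k-2^{n-1}$ and $2^{n-1}-1$ in Lemma \ref{coefficients} become trivial (respectively $\leq 0$ and $\geq\lfloor(k-1)/2\rfloor$) precisely when $n-1\geq\eta_k=\lceil\log_2 k\rceil$, so for such $n$ the coefficient $W_j$ is genuinely independent of $n$; the definitions of $\eta_k$, $u_{l_k}$, $v_{l_k}$, $u(j,k)$ are tailored to encode this. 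So far this only gives $c_{n,2k}$ as a polynomial in $2^{2(n-1)}$; to match the target form $\sum_j a_{j,k}2^{2jn}$ I would rewrite $2^{2j(n-1)}=2^{-2j}2^{2jn}$, giving a provisional identity $\sum_j a_{j,k}2^{2jn}=\sum_j 2^{-2j}w_{l_k}(j,k)2^{2jn}$ valid for all $n\geq\eta_k+1$.

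Since both sides are polynomials in the variable $2^{2n}$ agreeing on infinitely many values, their coefficients agree: $a_{j,k}=2^{-2j}w_{l_k}(j,k)$ for every $j$. But $w_{l_k}(j,k)$ itself contains, via the diagonal term $v_{l_k}(j,k)$ with $r=j$ or $j-r=j$... more precisely via the terms with $s=0$ or $s=k$ contributing $a_{j,k}\cdot a_{0,\cdot}$-type pieces — wait, rather: the top piece of $w$ includes a contribution proportional to $a_{j,k}$ coming from the $s=0$ term $c_{n-1,0}c_{n-1,2k}=2c_{n-1,2k}$ (and its mirror $s=k$). So the equation $a_{j,k}=2^{-2j}w(j,k)$ is really $a_{j,k}=2^{-2j}\bigl(2\cdot 2\,a_{j,k}+(\text{lower-}k\text{ stuff})\bigr)$ for $j\geq 2$, and solving the linear equation for $a_{j,k}$ produces the factor $b_j=2^{-2}(2^{2(j-1)}-1)^{-1}$ multiplying the remaining part $w_0(j,k)$ — this is the $j\in\llbracket 2,k\rrbracket$ case. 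For $j=1$ the mechanism is different: $a_{1,k}$ is not determined by this linear-equation trick because the $j=1$ term's "self-contribution" cancels, so instead I would determine $a_{1,k}$ from one extra evaluation — namely by plugging a specific small $n$ (the boundary value $n=\eta_k$, where the truncated sums in Lemma \ref{coefficients} differ from the full ones) into the already-known identity $c_{n,2k}=\sum_{j=1}^k a_{j,k}2^{2jn}$ and solving for $a_{1,k}$; matching the truncation-corrected right-hand side of Lemma \ref{coefficients} at $n=\eta_k$ against $\sum_{j=2}^k a_{j,k}2^{2j\eta_k}+a_{1,k}2^{2\eta_k}$ gives the stated formula $a_{1,k}=\sum_{l=2}^k(2^{-2l}w_k(l,k)-a_{l,k})2^{2\eta_k(l-1)}$, where now $w_k$ (as opposed to $w_0$) uses the truncated range $u_k=k-2^{\eta_k-1}$.

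The main obstacle I anticipate is the index gymnastics: verifying that after expanding the square and the cross-sum in Lemma \ref{coefficients} and re-collecting by powers of $2^{2(n-1)}$, the inner summation bounds land \emph{exactly} on $\max(1,j-\lfloor k/2\rfloor)\leq r\leq\min(\lfloor(j-1)/2\rfloor,\lfloor k/2\rfloor-1)$ for $u(j,k)$ and on $\max(1,j-k+s)\leq r\leq\min(j-1,s)$ for $v_{l_k}(j,k)$, including correctly handling the parity factors $\epsilon(k),\epsilon(j)$ (which track whether $k$ and the doubled index are even, i.e. whether a genuine "square" term $a_{j/2,k/2}^2$ appears) and the degenerate ranges where a sum is empty. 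A secondary subtlety is pinning down that $n=\eta_k$ is the right boundary value and that at that value the correction to the summation range is precisely $u_k=k-2^{\eta_k-1}$ — this requires checking $\lfloor(k-1)/2\rfloor\leq 2^{\eta_k-1}-1$ and $k-2^{\eta_k-1}\geq 0$, both of which follow from $2^{\eta_k-1}<k\leq 2^{\eta_k}$. Once the combinatorial identities are nailed down, the "polynomials agreeing at infinitely many points are equal" step and the linear solve for $b_j$ are routine.
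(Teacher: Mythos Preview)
Your proposal is correct and uses the same engine as the paper: expand Lemma~\ref{coefficients}, plug in the inductive closed forms for the lower-index coefficients, separate the $s=0$ term $2c_{n-1,2k}$, collect by powers of $2^{2(n-1)}$, solve the resulting linear equation in $a_{j,k}$ for $j\ge 2$ (producing the factor $b_j$), and pin down $a_{1,k}$ from the boundary value $n=\eta_k$ where the $s=0$ term is absent and the lower summation bound is $k-2^{\eta_k-1}$. The only organisational difference is that the paper runs a single induction on $n$ (verifying the already-given recursive definitions of the $a_{j,k}$), whereas you induct on $k$; as a consequence your ``polynomials in $2^{2n}$ agreeing on infinitely many values'' step tacitly uses the formula at $(n-1,k)$ via the $s=0$ term, so to make it airtight you must either nest an explicit induction on $n$ (base $n=\eta_k$) inside your $k$-induction or solve the linear recurrence $c_{n,2k}-4c_{n-1,2k}=\sum_{j\ge 2}2^{-2j}w_0(j,k)\,2^{2jn}$ directly---the paper's choice to induct on $n$ from the outset is precisely what sidesteps this.
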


\begin{proof} The case $k=0$ follows from Remark \ref{recursive-remark} (2).  We begin by observing the expression $\operatorname{min}\left(\left\lfloor\frac{k-1}{2}\right\rfloor,2^{n-1}-1\right)= \left\lfloor\frac{k-1}{2}\right\rfloor$, for any $k \in\llbracket 0 , 2^n\rrbracket$.
	
For any $k \in\llbracket  1, 2^n\rrbracket$. Let's introduce the statement $Q(n)$ as follows: 
	$$Q(n): c_{n,2k}= \sum_{j=1}^{k}a_{j,k}2^{2jn},$$ where $a_{j,k}$ is defined recursively as in the statement of the theorem.
	
	We will prove that the statement $Q(n)$ holds for all $n \in \mathbb{N}$ and $k \in \llbracket 1, 2^n \rrbracket$ using induction on $n$.
	 We start with the base case $n=1$ and $k\in \llbracket 1, 2\rrbracket$. In this case, we have $p_1(x)=x^4-4x^2+2$. Therefore, 
	\begin{itemize} 
	\item when $k=1$, since $a_{1,1}=-1$ then $a_{1,1}2^{2j}=-2^{2}= c_{1,2}$; 
 \item  when 
	$k=2$, since $\eta_k=1$, we have $a_{2,2}= \frac{1}{12} a_{1,1}^2= \frac{1}{12}$, and  $$a_{1,2} =2^2( -  a_{2,2}+ 2^{-4} a_{1,1}^2) = -\frac{1}{3}+ \frac{1}{4}= -\frac{1}{12}.$$ So that 
	$$ \sum_{j=1}^{2}a_{j,2}2^{2jn}= \frac{-1}{12}2^{2}+\frac{1}{12}2^{4}= -\frac{1}{3}+ \frac{4}{3}=1 = c_{1,4}.$$
	\end{itemize} 
	This proves the base case. 
	
	Assuming that the statement $Q(n-1)$ holds, our aim is to prove that the statement $Q(n)$ is also true.
	
	Using Lemma \ref{coefficients}, we know that the coefficients can be expressed as follows:
	
	$$c_{n,2k}=\epsilon(k)(c_{n-1,k})^2+2\sum_{s=\operatorname{max}(0, k-2^{n-1})}^{\left\lfloor\frac{k-1}{2}\right\rfloor}c_{n-1,2s}c_{n-1, 2(k-s)}.$$
	
	For any $k \geq 1$, let's define:
	$$\delta_n(k)=
	\begin{cases}
		0 & \text{if } n= \eta_k  \\
		1 & \text{ otherwise. } \\
	\end{cases}$$

	Furthermore, we recall that for all $k \in \mathbb{N}$:
	$$\epsilon (k)=
	\begin{cases}
		0 & \text{if } 2\nmid k\\
		1 & \text{if } 2\mid k \\
	\end{cases}.$$
	
	Using these definitions, we can rewrite the equation as follows:
	$$c_{n,2k}=\epsilon(k)(c_{n-1,k})^2+4\delta_n (k) c_{n-1, 2k}+2\sum_{s=\operatorname{max}(1, k-2^{n-1})}^{\lfloor\frac{k-1}{2}\rfloor}c_{n-1,2s}c_{n-1, 2(k-s)}.$$
	
Indeed, $2^{n-1}+1 \leq k \leq 2^{n}$ if and only if $ \log_2(k) \leq n \leq \log_2(k-1)+1$. Since $n\geq 1$, we have $k\geq 2$, and there exists a unique integer $n$ such that $ \log_2(k) \leq n \leq  \log_2(k-1)+1$, we denote this integer $\eta_k$. 
	
	Let's now define $$f_{n-1,k}=(c_{n-1,k})^2  \text{\ \  and \ \ }g_{n-1,k}=\sum_{s=\operatorname{max}(1, k-2^{n-1})}^{\lfloor\frac{k-1}{2}\rfloor}c_{n-1,2s}c_{n-1,2(k-s)}.$$
	
	By applying the induction hypothesis and the base change of the sums (see \cite[pg  201]{thesis}), we can deduce that
	\begin{align*}
		f_{n-1,k}&=(c_{n-1,k})^2 = \left(\sum_{j=1}^{\frac{k}{2}}a_{j,\frac{k}{2}}2^{-2j}2^{2nj}\right)^2\\
		&=\sum_{j=1}^{\frac{k}{2}}a_{j,\frac{k}{2}}^22^{-4j}2^{2(2nj)}+2\sum_{l=1}^{\frac{k}{2}-1}\sum_{m=l+1}^{\frac{k}{2}}a_{l,\frac{k}{2}}a_{m,\frac{k}{2}}2^{-2(l+m)}2^{2n(l+m)}\\
		&=\sum_{j=2}^{k} (\epsilon (j)a_{\frac{j}{2},\frac{k}{2}}^2+{u}(j,k)) 2^{-2j} 2^{2nj} \
	\end{align*}
	and
	\begin{align*}
		g_{n-1,k} &=\sum_{s=\operatorname{max}(1, k-2^{n-1})}^{\lfloor\frac{k-1}{2}\rfloor}c_{n-1,2s}c_{n-1, 2(k-s)}\\
		&=\sum_{s=\operatorname{max}(1, k-2^{n-1})}^{\lfloor\frac{k-1}{2}\rfloor}\sum_{r=1}^{s}\sum_{t=1}^{k-s}a_{r, s}a_{t, k-s}2^{-2(r+t)}2^{2n(r+t)} \\
		&=\sum_{j=2}^{k}v_{l_k}(j,k) 2^{-2j} 2^{2nj} 
	\end{align*}
	where  $j=r+t$, $l_k= 0$, if $n\neq \eta_k$ and $l_k=k$, otherwise.
	
	Thus, we have:
	\begin{equation}\label{cn2k}
		\begin{aligned}
			c_{n,2k} 
			&=\sum_{j=2}^{k} \left( \epsilon (k) \epsilon (j)a_{\frac{j}{2},\frac{k}{2}}^2+{u}(j,k)+ 2v_{l_k}(j,k) +4\delta_n (k) a_{j,k} \right) 2^{-2j} 2^{2nj}+\delta_n (k)  a_{1,k}2^{2n},
		\end{aligned}
	\end{equation}
Thus, when $n\neq \eta_k$, we deduce the inductive step from Equation (\ref{cn2k}) from the recursive definition of $a_{j,k}'s$, and
$$c_{n,2k}= 	\sum\limits^{k}_{j=1}a_{j,k}2^{2jn}.$$
	
	When $n= \eta_k$, from Equation (\ref{cn2k}), we obtain the induction hypothesis if and only if
	$$\begin{array}{lll} a_{1,k}&=&\sum\limits_{j=2}^{k} \left( \left( \epsilon (k) \epsilon (j)a_{\frac{j}{2},\frac{k}{2}}^2+{u}(j,k)+ 2v_{l_k}(j,k)\right)2^{-2j} -a_{j,k}  \right)2^{2 \eta_k(j-1)} \end{array}$$
	
	Therefore, the inductive step is satisfied, and for all $n$, we have proven the theorem by induction.
\end{proof}

\begin{remark}\label{akk}
	According to the notation of Theorem \ref{coefficients-ai}, in particular:
	for all $k\geq 1$, we have
	\begin{align*}
		a_{k,k} &= b_k \left(\epsilon(k)a_{\frac{k}{2},\frac{k}{2}}^2 + \sum_{s=1}^{\lfloor\frac{k-1}{2}\rfloor}2a_{s,s}a_{k-s, k-s} \right);
			\end{align*}
			and for all $k \geq 2$, we have
	
		\begin{align*}
					a_{2,k} &= b_2 \left( \epsilon(k)a_{1,\frac{k}{2}}^2 +\sum_{s=1}^{\lfloor\frac{k-1}{2}\rfloor}2a_{1,s}a_{1, k-s}\right). \\
	\end{align*}

\end{remark}
We compute a few values for the $a_{i,j}$'s in the next example. 
\begin{example} 
We demonstrated in the base case of the induction proof that the formulas yield the values of \(a_{1,1}\), \(a_{1,2}\), and \(a_{2,2}\), which were also calculated in the discussion preceding Theorem \ref{coefficients-ai}. Indeed, we found $a_{1,1}=-1$, $a_{1,2}= -\frac{1}{12}$ and $a_{2,2}= \frac{1}{12}$. 

We now compute $a_{i,j}$ for $j \in \{ 3, 4\}$ and $i\in \{1, \cdots, j\}$. \\
For $j=3$, we have
 $$\begin{array}{l}   \begin{array}{lll}   a_{3,3} &=& b_3w_0(3,3)=  2 b_3 v_0(3, 3)=2 b_3 a_{1,1} a_{2,2}=  -\frac{2^{-2} ( 2^4 -1)^{-1}}{6} =   -\frac{1}{360}\end{array}\\  
\begin{array}{lll}   a_{2, 3}& =& b_2w_0(2,3)=  2 b_2 v_0(2, 3)=2b_2 a_{1,1} a_{1,2}=  \frac{1}{72}\end{array}\\
\begin{array}{lll}a_{1,3} &=&(2^{-4} w_3(2,3) - a_{2,3})  2^{2 \eta_3}  +  (2^{-6} w_3(3,3) - a_{3,3})2^{4 \eta_3} \\
&=&    2v_3(2,3) - \frac{2^{4}}{72}   +  2^{3} v_3(3,3) +\frac{ 2^{8}}{ 360}   =    2a_{1,1} a_{1,2} - \frac{2^{4}}{72}   +  2^{3} a_{1,1} a_{2,2}+\frac{ 2^{8}}{ 360}  \\
&=&   \frac{1}{6} - \frac{2^{4}}{72}   +  -\frac{2}{3}+\frac{ 2^{8}}{ 360} = -\frac{1}{90},\end{array} \end{array}$$
 For $j=4$, we have
$$\begin{array}{l} \begin{array}{lll}    a_{4,4} &=&  b_4w_0(4,4)=2^{-2}(2^6-1)^{-1}(a_{2,2}^2+2a_{1,1}a_{3,3})=   2^{-2}(2^6-1)^{-1}(\frac{1}{144}+\frac{2}{360})=\frac{1}{20160},\end{array}\\
\begin{array}{lll}   a_{3,4} &=&   b_3w_0(3,4)=b_3(u(3,4)+2v_0(3,4))=b_3(2a_{1,2}a_{2,2}+2a_{1,1}a_{2,3})

=\frac{-1}{1440},\end{array}\\
\begin{array}{lll}   a_{2,4} &=&   b_2w_0(2,4)=b_2(a_{1,2}^2+2v_0(2,4))=b_2(a_{1,2}^2+2a_{1,1}a_{1,3})
	
=\frac{7}{2880},\end{array}\\
\begin{array}{lll}   a_{1,4} &=&(2^{-4}w_4(2,4)-a_{2,4})2^4+(2^{-6}w_4(3,4)-a_{3,4})2^8+(2^{-8}w_4(4,4)-a_{4,4})2^{12}\\
&=&(2^{-4}a_{1,2}^2-a_{2,4})2^4+(2^{-6}u(3,4)-a_{3,4})2^8+(2^{-8}a_{2,2}^2-a_{4,4})2^{12}\\
&=&(2^{-4}a_{1,2}^2-a_{2,4})2^4+(2^{-6}(2a_{1,2}a_{2,2})-a_{3,4})2^8+(2^{-8}a_{2,2}^2-a_{4,4})2^{12}\\
&=&  (\frac{2^{-4}}{144}-\frac{7}{2880})2^4+(\frac{-2^{-5}}{144}+\frac{1}{1440})2^8+(\frac{2^{-8}}{144}-\frac{1}{20160})2^{12}= \frac{-1}{560}\end{array} \end{array}$$ 
\end{example}

\subsection{A different recursive method for calculating the \texorpdfstring{$a_{i,j}$'s}{Lg}}
In the following lemma, we present a different method of computing the coefficients $a_{j,k}$ as defined in Theorem \ref{coefficients-ai}. This approach offers enhanced efficiency by reducing the number of required operations. To achieve this, we make use of the values of $c_{n, 2k}$ derived from Theorem \ref{coefficients-field} and apply the algorithm described in \cite[pg.895]{pereyra-vandermonde}.
\begin{proposition}\label{vandermond}
	Let $n\geq 1$ and $k$ be positive integers. We set $\eta_k = \lceil \operatorname{log}_2 (k)\rceil$. For each $j \in \llbracket1,  k \rrbracket$, we define $\ell_k (j) = j -1 + \eta_k$. According to the notation of Theorem \ref{coefficients-ai}, for each $i\in \llbracket1,  k \rrbracket$, we define the vectors:
	\begin{itemize} 
	\item ${\nu}^{(i)}=(\nu^{(i)}_j)_{j\in \llbracket1,k \rrbracket}$ is defined recursively as follows:
	\begin{itemize} 
	\item $\nu^{(1)}= \left( \frac{c_{\ell_k (j), 2k}}{2^{2\ell_k (j)}} \right)_{j\in \llbracket1,k \rrbracket} $;
	\item for $i \in \llbracket 1,  k-1\rrbracket$ and $j \in \llbracket1, k\rrbracket$, we define $\nu_{j}^{(i+1)}$ as follows:
	$$
	\nu_{j}^{(i+1)} =
	\begin{cases}
		\frac{\nu_j^{(i)}-\nu_{j-1}^{(i)}}{2^{2\ell_k (j)}-2^{2(\ell_k (j)-i)}} & \text{if } j\in \llbracket i+1, k\rrbracket \\
		\nu_{j}^{(i)} & \text{if } j\in \llbracket 1, i\rrbracket \\
	\end{cases}
	$$
	\end{itemize}
	\item ${a}^{(i)}=(a^{(i)}_j)_{j\in \llbracket1,k \rrbracket} $ is defined recursively as follows:
		\begin{itemize} 
	\item $a^{(k)}={\nu^{(k)}}$;
	\item for $i \in \llbracket1, k-1\rrbracket$ and $j \in \llbracket 1, k\rrbracket$, we define $a_{j}^{(i)}$ as follows:
	$$
	a_{j}^{(i)} =
	\begin{cases}
		a_{j}^{(i+1)}-2^{2\ell_k (i)}a_{j+1}^{(i+1)} & \text{if } j\in \llbracket i,  k-1\rrbracket \\
		a_{j}^{(i+1)} & \text{if } j\leq i-1 \ \text{or} \ j=k\\
	\end{cases}
	$$
	\end{itemize}
	\end{itemize}
	Then, we have, $ a_{j,k}=a_j^{(1)}$ for all $j\in \llbracket1, k\rrbracket$.
\end{proposition}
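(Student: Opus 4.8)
The plan is to recognize the proposition as a two-phase Vandermonde-type solver applied to a specific polynomial system, and to prove correctness by verifying each phase against a known linear-algebra identity. The starting point is Theorem \ref{coefficients-ai}: for $n \geq 1$ and $1 \leq k \leq 2^n$ we have $c_{n,2k} = \sum_{j=1}^{k} a_{j,k} 2^{2jn}$, where the $a_{j,k}$ do not depend on $n$. Fixing $k$, I would treat $(a_{1,k}, \dots, a_{k,k})$ as unknowns and the $k$ equations obtained by specializing $n$ to the values $n = \ell_k(1), \ell_k(2), \dots, \ell_k(k)$, i.e.\ $n = \eta_k, \eta_k + 1, \dots, \eta_k + k - 1$, as a linear system. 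First I would check that these choices of $n$ are admissible, namely that $\ell_k(j) \leq 2^{\ell_k(j)}$ h(equivalently $k \leq 2^{\ell_k(j)} - j + 1$) so that the formula of Theorem \ref{coefficients-ai} genuinely applies; since $\eta_k = \lceil \log_2 k \rceil$ gives $k \leq 2^{\eta_k}$, this holds with room to spare. Writing $x_i := 2^{2\ell_k(i)}$, the coefficient matrix is the (generalized) Vandermonde matrix $V = (x_i^{\,j})_{i,j=1}^{k}$ acting on $(a_{j,k})_j$, with right-hand side $(c_{\ell_k(i),2k})_i$; dividing row $i$ by $x_i$ produces exactly the vector $\nu^{(1)}$ defined in the proposition and the standard Vandermonde matrix $(x_i^{\,j-1})_{i,j}$.

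Next I would match the two recursions in the proposition to the two halves of the classical Vandermonde-solve algorithm of \cite[pg.~895]{pereyra-vandermonde}. The first recursion, producing $\nu^{(i+1)}$ from $\nu^{(i)}$ via the divided-difference-like step $\nu_j^{(i+1)} = (\nu_j^{(i)} - \nu_{j-1}^{(i)})/(x_j - x_{j-i})$ with nodes $x_r = 2^{2\ell_k(r)}$, is precisely the forward (Newton divided-difference) elimination phase: I would prove by induction on $i$ that $\nu_j^{(i)}$ for $j \geq i$ equals the divided difference of the data over the nodes $x_{j-i+1}, \dots, x_j$, so that after $k-1$ steps $\nu^{(k)}$ holds the Newton-form coefficients of the interpolating polynomial through the points $(x_i, \nu^{(1)}_i)$. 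Since the nodes $x_r$ are pairwise distinct (the exponents $2\ell_k(r) = 2(r-1+\eta_k)$ are strictly increasing), the divided differences are well defined and the interpolant is unique. The second recursion, $a_j^{(i)} = a_j^{(i+1)} - 2^{2\ell_k(i)} a_{j+1}^{(i+1)} = a_j^{(i+1)} - x_i\, a_{j+1}^{(i+1)}$, is the Horner-type back-substitution that converts the Newton-form coefficients into monomial-basis coefficients; I would prove by descending induction on $i$ that $a^{(i)}$ holds the monomial coefficients of the partially-expanded Newton polynomial $\sum_r \nu^{(k)}_r \prod_{s < \min(r,i)} (x - x_s)$, so that $a^{(1)}$ holds the monomial coefficients of the full interpolant.

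Finally I would close the loop: the interpolating polynomial $P(x) = \sum_{j=1}^{k} a_j^{(1)} x^{j-1}$ satisfies $P(x_i) = \nu^{(1)}_i = c_{\ell_k(i),2k}/2^{2\ell_k(i)}$ for each $i$, which after multiplying by $x_i = 2^{2\ell_k(i)}$ reads $\sum_{j=1}^{k} a_j^{(1)} \, 2^{2j\ell_k(i)} = c_{\ell_k(i),2k}$. On the other hand Theorem \ref{coefficients-ai} gives $\sum_{j=1}^{k} a_{j,k}\, 2^{2j\ell_k(i)} = c_{\ell_k(i),2k}$ for the same $k$ admissible values $n = \ell_k(i)$. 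Thus both $(a_j^{(1)})_j$ and $(a_{j,k})_j$ solve the same $k \times k$ system whose matrix is the nonsingular Vandermonde matrix $(2^{2j\ell_k(i)})_{i,j}$ (nonsingular because the $2^{2\ell_k(i)}$ are distinct and nonzero), so by uniqueness $a_j^{(1)} = a_{j,k}$ for all $j$, as claimed. The routine but slightly fiddly part — and the one place where I would slow down — is bookkeeping the index ranges in the two inductions (which entries of $\nu^{(i)}$ and $a^{(i)}$ are ``frozen'' versus ``active'' at step $i$, and checking the boundary cases $j = k$ and $j \leq i-1$ in the second recursion) so that the matching with the textbook algorithm is exact; the mathematical content beyond that is entirely the standard correctness of Newton interpolation plus nonsingularity of a Vandermonde matrix with distinct nodes.
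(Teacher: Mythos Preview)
Your proposal is correct and follows essentially the same approach as the paper: set up the Vandermonde system from Theorem \ref{coefficients-ai} at the $k$ nodes $n=\ell_k(1),\dots,\ell_k(k)$, then apply the Bj\"orck--Pereyra algorithm \cite[p.~895]{pereyra-vandermonde} to solve it. The paper's own proof is terser---it writes down the Vandermonde matrix equation and simply cites the algorithm---whereas you additionally unpack the two phases (divided differences, then Horner back-substitution) and add the admissibility check $k\leq 2^{\ell_k(j)}$, which the paper leaves implicit.
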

  \begin{proof}
	For $n\in \llbracket1,k\rrbracket$, we have by Theorem \ref{coefficients-ai} the following Vandermonde matrix equation
      \[
\begin{bmatrix}
    1 & 2^{2\eta_k} & 2^{4\eta_k} &\cdots  & 2^{2\eta_k(k-2)} & 2^{2\eta_k(k-1)} \\
    1 & 2^{2(1+\eta_k)} & 2^{4(1+\eta_k)} & \cdots  & 2^{2(\eta_k+1)(k-2)} & 2^{2(\eta_k+1)(k-1)} \\
    \vdots & \vdots & \vdots & \vdots & \vdots & \vdots  \\
    1 & 2^{2(k-1+\eta_k)} & 2^{4(k-1+\eta_k)}&  \cdots & 2^{2(k-1+\eta_k)(k-2)} & 2^{2(k-1+\eta_k)(k-1)}\\
\end{bmatrix}
\begin{bmatrix}
    a_{1,k} \\
  a_{2,k}\\
    \vdots\\
   a_{k,k}\\
\end{bmatrix}
=
\begin{bmatrix}
   \frac{c_{\eta_k, 2k}}{2^{2\eta_k}} \\
  \frac{c_{1+\eta_k, 2k}}{2^{2(1+\eta_k)}} \\
\vdots \\
\frac{c_{k-1+\eta_k,2k}}{2^{2(k-1+\eta_k)} }
\end{bmatrix}
\]

Therefore, the Lemma follows from the algorithm provided in \cite[p.895]{pereyra-vandermonde}.
\end{proof}

\begin{example}
  \begin{enumerate}
  \item {\bf Case $k=3$.} We can represent the Vandermonde matrix equation as follows:

        \[
\begin{bmatrix}
    1 & 2^4 & 2^8 \\ \\
    1 & 2^{6} & 2^{12}\\  \\
        1 & 2^{8} & 2^{16}  \\ 
\end{bmatrix}
\begin{bmatrix}
    a_{1,3} \\ \\
  a_{2,3}\\ \\
   a_{3,3}\\
\end{bmatrix}
=
\begin{bmatrix}
  \frac{c_{2,6}}{2^4} \\ \\
\frac{c_{3,6}}{2^{6}}\\\\
   \frac{c_{4, 6}}{2^8} \\ 
\end{bmatrix} = \begin{bmatrix}
  \frac{-1}{2} \\ \\
\frac{-21}{2}\\ \\
   -\frac{357}{2} \\ 
\end{bmatrix} = \nu^{(1)}
\]

By applying the recursive formulas, we can compute the following results:
$$\begin{array}{cccc}
 \nu^{(2)}  &= \left[\nu_1^{(1)},      \frac{ \nu_2^{(1)}- \nu_1^{(1)}}{2^6-2^4},  \frac{\nu_3^{(1)}- \nu_2^{(1)}}{2^8-2^6}\right]^\intercal= \left[ -\frac{1}{2} ,      -\frac{5}{24},  -\frac{7}{8} \right]^\intercal;
   \end{array}$$
$$\begin{array}{cccc}
    {a}^{(3)}= {\nu^{(3)}}  &= \left[ \nu_1^{(2)}, \nu_2^{(2)}, \frac{\nu_3^{(2)}- \nu_2^{(2)}}{2^8-2^4}\right]^\intercal =  \left[  -\frac{1}{2} ,   -\frac{5}{24}, -\frac{1}{360}\right]^\intercal;
\end{array}$$
 $$\begin{array}{cccc}
      {a}^{(2)}   & = \left[ {a}_{1}^{(3)}, {a}_{2}^{(3)}-2^6 {a}_{3}^{(3)}, {a}_{3}^{(3)}\right]^\intercal = \left[  -\frac{1}{2}, -\frac{11}{360}, -\frac{1}{360}\right]^\intercal;
 \end{array}$$
 $$\begin{array}{cccc}
      {a}^{(1)}   & = \left[   a_{1,3},  a_{2,3},   a_{3,3}\right]^\intercal=  \left[  {a}_{1}^{(2)}-2^4 {a}_{2}^{(2)}, {a}_{2}^{(2)}-2^4 {a}_{3}^{(2)}, {a}_{3}^{(2)}\right]^\intercal = \left[  -\frac{1}{90}, \frac{1}{72},  -\frac{1}{360}\right]^\intercal
 \end{array}.$$
 \item {\bf Case $k=4$.} The Vandermonde matrix is as follows:
 \[\begin{bmatrix}
 	1& 2^4 & 2^8 & 2^{12}\\ \\
 	1& 2^6 & 2^{12} & 2^{18}\\ \\
 	1&2^8 & 2^{16} & 2^{24} \\ \\ 
	 	1 & 2^{10} & 2^{20} & 2^{30}\\ 
 \end{bmatrix}
 \begin{bmatrix}
 	a_{1,4}\\ \\
 	a_{2, 4}\\ \\
 	a_{3, 4}\\ \\
 	a_{4, 4}
 \end{bmatrix}
 =
 \begin{bmatrix}
 	\frac{c_{2, 8}}{2^4}\\ \\
 	\frac{c_{3, 8}}{2^6}\\ \\
 	\frac{c_{4, 8}}{2^8} \\ \\ 
	\frac{c_{5, 8}}{2^{10}}
 \end{bmatrix}
 =
 \begin{bmatrix}
 	\frac{1}{16}\\ \\
 	\frac{165}{16}\\ \\
 	\frac{12597}{16} \\ \\ 
	\frac{840565}{16} 
 \end{bmatrix}
 =\nu^{(1)}
 \]
 Applying the recursive formulas in Lemma \ref{vandermond}, we obtain the following matrices:
 $$\nu^{(2)}=\left[ \nu_1^{(1)}, \frac{\nu_2^{(1)}-\nu_1^{(1)}}{2^6-2^4}, \frac{\nu_3^{(1)}-\nu_2^{(1)}}{2^8-2^6}, \frac{\nu_4^{(1)}-\nu_3^{(1)}}{2^{10}-2^8}\right]^\intercal=\left[ 	\frac{1}{16}, \frac{41}{192}, \frac{259}{64}, \frac{12937}{192}\right]^\intercal;
 $$
 $$\nu^{(3)}=\left[\nu_1^{(2)}, \nu_2^{(2)},\frac{\nu_3^{(2)}-\nu_2^{(2)}}{2^8-2^4}, \frac{\nu_4^{(2)}-\nu_3^{(2)}}{2^{10}-2^6} \right]^\intercal=\left[\frac{1}{16}, \frac{41}{192},\frac{23}{1440},\frac{19}{288}\right]^\intercal;
 $$
 $$\nu^{(4)}=\left[\nu_1^{(3)}, \nu_2^{(3)}, \nu_3^{(3)}, \frac{\nu_4^{(3)}-\nu_3^{(3)}}{2^{10}-2^4}\right]^\intercal= \left[\frac{1}{16}, \frac{41}{192},\frac{23}{1440}, \frac{1}{20160}\right]^\intercal=a^{(4)};$$
 $$a^{(3)}=\left[a_1^{(4)}, a_2^{(4)}, a_3^{(4)}-2^8a_4^{(4)}, a_4^{(4)}\right]^\intercal=\left[\frac{1}{16}, \frac{41}{192}, \frac{11}{3360}, \frac{1}{20160}\right]^\intercal;$$
 $$a^{(2)}=\left[a_1^{(3)}, a_2^{(3)}-2^6a_3^{(3)}, a_3^{(3)}-2^6a_4^{(3)}, a_4^{(3)}\right]^\intercal=\left[\frac{1}{16}, \frac{9}{2240},  \frac{1}{10080}, \frac{1}{20160}\right]^\intercal;
 $$
 $$a^{(1)}=\left[a_1^{(2)}-2^4a_2^{(2)}, a_2^{(2)}-2^4a_3^{(2)}, a_3-2^{4}a_4^{2}, a_4^{(2)}\right]=\left[-\frac{1}{560}, \frac{7}{2880}, -\frac{1}{1440}, \frac{1}{20160}\right]^\intercal;$$
 As a result, we obtain $$a_{1,4}=-\frac{1}{560}, \ a_{2, 4}=\frac{7}{2880}, \ a_{3, 4}=-\frac{1}{1440}, \ a_{4,4}=\frac{1}{20160}.$$  
 \end{enumerate}
\end{example}

\section{Representation for the coefficient \texorpdfstring{$a_{k,k}$}{Lg} as weighted Catalan numbers}
In this section, we show an interesting connection between the values of the constants $a_{k,k}$ computed in Theorem \ref{coefficients-ai} and Lemma \ref{vandermond} and the famous Catalan numbers, revealing how they are related. We use the notation introduced in Theorem \ref{coefficients-ai}. We introduce the following definition regarding ordered trees, which we will use to express a formula for the $a_{k,k}$s.
\begin{definition}\label{def:gk}
 Let $k \in \mathbb{N}\setminus \{0\}$.   The set $\mathcal{T}_k$ consists of labeled ordered tree defined as follows:
    \begin{itemize}
        \item The nodes are labeled with numbers ranging from $1$ to $k$. We write  this nodes as $\scalebox{0.8}{\begin{tikzpicture}[level distance=1.5cm,
     	level 1/.style={sibling distance=3.5cm},
     	level 2/.style={sibling distance=1cm},
     	tree node/.style={circle,draw},
     	every child node/.style={tree node}]
     	
     	\node[tree node] (Root)  {1};
     \end{tikzpicture} } $, $\cdots$, $\scalebox{0.8}{\begin{tikzpicture}[level distance=1.5cm,
     	level 1/.style={sibling distance=3.5cm},
     	level 2/.style={sibling distance=1cm},
     	tree node/.style={circle,draw},
     	every child node/.style={tree node}]
     	
     	\node[tree node] (Root)  {k};
     \end{tikzpicture} } $.

 \item \begin{itemize}     
\item Each node labeled \( 1 \) is connected by an edge to exactly one parent node, which is labeled with an integer strictly greater than \( 1 \).
\item There is only one root, labeled \( k \), and this root is connected by an edge to exactly two ordered child nodes, whose labels add up to \( k \).
\item For each node labeled \( v \) with \( 2 < v < k \), there are exactly three nodes connected to it: one parent node labeled with an integer strictly greater than \( v \), and two ordered child nodes whose labels sum to \( v \).
        \end{itemize}
        In summary, all the possible edges are included in the following trees: 
        $$\begin{array}{ccc} \scalebox{0.8}{\begin{tikzpicture}
				[scale=.9,auto=center,every node/.style={circle, draw}] \node (a1) at (0, 0){1} ;
				\node (a2) at (2, 0) {i} ;
				\draw (a1) -- (a2);
			\end{tikzpicture} }&  \scalebox{0.8}{\begin{tikzpicture}  
				[every node/.style={circle,draw}] 
				
				\node (a1) at (0,0) {\scalebox{0.8}{$k_1$}};  
				\node (a2) at (1,1)  {$k$}; 
				\node (a3) at (2,0)  {\scalebox{0.8}{$k_2$}};  
				
				\draw (a1) -- (a2);  
				\draw (a2) -- (a3);  
			\end{tikzpicture} } & \scalebox{0.8}{\begin{tikzpicture}  
				[scale=.9,auto=center,every node/.style={circle, draw}] 
				
				\node (a1) at (0,0) {\scalebox{0.8}{$v_1$}};  
				\node (a2) at (1,1)  {$v$}; 
				\node (a3) at (2,0)  {\scalebox{0.8}{$v_2$}}; 
				\node (a4) at (0,2)  {$u$}; 
				
				\draw (a1) -- (a2);  
				\draw (a2) -- (a3);
				\draw (a2) -- (a4); 
			\end{tikzpicture}}  \text{   }  \scalebox{0.8}{\begin{tikzpicture}  
				[scale=.9,auto=center,every node/.style={circle, draw}] 
				
				\node (a1) at (0,0) {\scalebox{0.8}{$v_1$}};  
				\node (a2) at (1,1)  {$v$}; 
				\node (a3) at (2,0)  {\scalebox{0.8}{$v_2$}}; 
				\node (a4) at (2,2)  {$u$}; 
				
				\draw (a1) -- (a2);  
				\draw (a2) -- (a3);
				\draw (a2) -- (a4); 
			\end{tikzpicture}} \\ 
			i > 0 & k_1+k_2=k &  u, v \in \{1, \cdots, k-1\}, \\ && v_1+v_2=v, \text{ and } u>v \end{array}$$
    \end{itemize}
\end{definition}

In the next example, we describe $\mathcal{T}_k$ for $k \in \{ 1, 2, 3,4\}$. 
  \begin{example}
$$\tiny
   \begin{array}{ccc}
     \mathcal{T}_1 =\left\{ \begin{array}{c} \scalebox{0.8}{ \begin{tikzpicture}[level distance=1.5cm,
     	level 1/.style={sibling distance=3.5cm},
     	level 2/.style={sibling distance=1cm},
     	tree node/.style={circle,draw},
     	every child node/.style={tree node}]
     	
     	\node[tree node] (Root)  {1};
     \end{tikzpicture} } \end{array} \right\},  &  \mathcal{T}_2= \left\{\begin{array}{c} \scalebox{0.8}{\begin{tikzpicture}[level distance=1cm,
     level 1/.style={sibling distance=2cm},
     level 2/.style={sibling distance=1cm},
     tree node/.style={circle,draw},
     every child node/.style={tree node}]
     
     \node[tree node] (Root) {2}
     child {
     	node {1} 
     }
     child {	node {1}
     };	
     \end{tikzpicture}} \end{array} \right\},  & \mathcal{T}_3= \left\{  \begin{array}{ccc} \scalebox{0.8}{ \begin{tikzpicture}[level distance=1cm,
     level 1/.style={sibling distance=2cm},
     level 2/.style={sibling distance=1cm},
     tree node/.style={circle,draw},
     every child node/.style={tree node}]
     
     \node[tree node] (Root) {3}
     child {
     	node {2} 
     	child{node {1}}
     	child {node{1}}
     }
     child {	node {1}
     };	
     \end{tikzpicture} } & \begin{minipage}{0.1cm}  , \\ \text{} \\  \text{} \\   \text{} \\  \text{} \\  \text{} \\ \end{minipage} \scalebox{0.8}{ \begin{tikzpicture}[level distance=1cm,
  level 1/.style={sibling distance=2cm},
  level 2/.style={sibling distance=1cm},
  tree node/.style={circle,draw},
  every child node/.style={tree node}]
  
  \node[tree node] (Root) {3}
  child {	node {1}
  }
  child {
  	node {2} 
  	child{node {1}}
  	child {node{1}}
  };	
  \end{tikzpicture} } \end{array}
  \right\} 
   \end{array}$$

   $$
   \tiny
   \begin{array}{ccc}
        \mathcal{T}_4=  \left\{ \begin{array}{cccccccc} 
\scalebox{0.8}{ \begin{tikzpicture}[level distance=1cm,
	level 1/.style={sibling distance=2cm},
	level 2/.style={sibling distance=1cm},
	tree node/.style={circle,draw},
	every child node/.style={tree node}]
	
	\node[tree node] (Root) {4}
	child {
		node {2} 
		child{node {1}}
		child {node{1}}
	}
	child {
		node {2} 
		child{node {1}}
		child {node{1}}
	};	
\end{tikzpicture}}  & \begin{minipage}{0.1cm}  , \\ \text{} \\  \text{} \\   \text{} \\  \text{} \\  \text{} \\ \end{minipage} &

 \scalebox{0.8}{ \begin{tikzpicture}[level distance=1cm,
  	level 1/.style={sibling distance=2cm},
  	level 2/.style={sibling distance=1cm},
  	tree node/.style={circle,draw},
  	every child node/.style={tree node}]
  	
  	\node[tree node] (Root) {4}
  	child {
  		node {3} 
  		child{node {2}
  			child {node {1}}
  			child{node{1}}
  		}	
  		child {
  			node{1}}
  	}
  	child {
  		node {1} };
  \end{tikzpicture}}
& \begin{minipage}{0.1cm}  , \\ \text{} \\  \text{} \\   \text{} \\  \text{} \\ \text{} \\ \end{minipage} & 

\scalebox{0.8}{ \begin{tikzpicture}[level distance=1cm,
	level 1/.style={sibling distance=2cm},
	level 2/.style={sibling distance=1cm},
	tree node/.style={circle,draw},
	every child node/.style={tree node}]
	
	\node[tree node] (Root) {4}
	child {
		node {3} 	
		child {
			node{1}}
		child{node {2}
			child {node {1}}
			child{node{1}}
		}
	}
	child {
		node {1} };
\end{tikzpicture}  }
& \begin{minipage}{0.2cm}  , \\ \text{} \\  \text{} \\   \text{} \\  \text{} \\\end{minipage}  \\

   \scalebox{0.8}{ \begin{tikzpicture}[level distance=1cm,
   	level 1/.style={sibling distance=2cm},
   	level 2/.style={sibling distance=1cm},
   	tree node/.style={circle,draw},
   	every child node/.style={tree node}]
   	
   	\node[tree node] (Root) {4}
   	child {
   		node {1} }
   	child {
   		node {3} 
   		child{node {2}
   			child {node {1}}
   			child{node{1}}
   		}	
   		child {
   			node{1}}
   	};
   \end{tikzpicture}}
   
 &  \begin{minipage}{0.1cm}  , \\ \text{} \\  \text{} \\   \text{} \\  \text{} \\  \text{} \\\end{minipage}& 
  \scalebox{0.8}{ \begin{tikzpicture}[level distance=1cm,
  	level 1/.style={sibling distance=2cm},
  	level 2/.style={sibling distance=1cm},
  	tree node/.style={circle,draw},
  	every child node/.style={tree node}]
  	\node[tree node] (Root) {4}
  	child {
  		node {1} }
  	child {
  		node {3} 
  		child {
  			node{1}}
  		child{node {2}
  			child {node {1}}
  			child{node{1}}
  		}
  	};
  \end{tikzpicture}} \end{array} \right\}
   \end{array}$$
  \end{example}

\begin{remark}\label{Gk}

\begin{enumerate}
\item The cardinality of \( \mathcal{T}_k \) is the \((k-1)^{\text{th}}\) Catalan number.
\item For every ordered tree in \( \mathcal{T}_k \), removing the top node labeled \( k \) and its associated edges splits the tree into two distinct ordered trees:  \( S\in \mathcal{T}_s \) and \( R \in \mathcal{T}_t \) with \( s + t = k \). Conversely, given two ordered trees \( S \in \mathcal{T}_s \) and \( R \in \mathcal{T}_t \) with \( s + t = k \), we can construct distinct trees in \( \mathcal{T}_k \) as follows:  
\begin{itemize}
\item If \( s \neq t \), we can form two distinct trees by adding a vertex labeled \( k \), connecting it to the root of \( S \) with an edge labeled \( s \), and to the root of \( R \) with an edge labeled \( t \).  
\item If \( s = t \), we can form one tree as described earlier.  
\end{itemize}
\end{enumerate}

\end{remark}
In the following definition, we introduce some notation. 
\begin{definition}\label{def:notat}
Let \( k \in \mathbb{N} \setminus \{0\} \) and \( T \) be an ordered tree in the set \( \mathcal{T}_k \).
\begin{enumerate}
\item We denote by \( V_T \) the set of integers labeling the nodes of \( T \).
\item For any integer \( v \in V_T \), we let \( \delta_{v,T} \) denote the number of times the label \( v \) appears on the nodes of \( T \).
\end{enumerate}
\end{definition}

We now prove that the values \( a_{k,k} \) are also weighted Catalan numbers.
\begin{proposition}\label{wc}
Let \( k \in \mathbb{N} \setminus \{0\} \).  
We have the following expression for \( a_{k,k} \):

\[
a_{k, k} = \sum_{T \in \mathcal{T}_k} \prod_{v \in V_T} b_v^{\delta_{v,T}},
\]

where \( b_1 = -1 \) and \( b_v = 2^{-2} \left( 2^{2(v-1)} - 1 \right)^{-1} \) for all \( v > 1 \).
\end{proposition}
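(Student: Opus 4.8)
The plan is to prove the formula by strong induction on $k$, using the recursive description of $a_{k,k}$ from Remark \ref{akk} together with the combinatorial decomposition of trees in $\mathcal{T}_k$ given in Remark \ref{Gk}(2). First I would set $W_k := \sum_{T \in \mathcal{T}_k} \prod_{v \in V_T} b_v^{\delta_{v,T}}$, the right-hand side of the claimed identity, and show that $(W_k)_{k \geq 1}$ satisfies the same recursion and initial condition as $(a_{k,k})_{k \geq 1}$. For the base case $k = 1$, the set $\mathcal{T}_1$ consists of a single tree (one node labeled $1$), so $W_1 = b_1 = -1 = a_{1,1}$, matching the value established in Theorem \ref{coefficients-ai}.

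For the inductive step, fix $k \geq 2$ and assume $W_s = a_{s,s}$ for all $1 \leq s < k$. Every tree $T \in \mathcal{T}_k$ has root labeled $k$ with two ordered children; removing the root and its two edges (which are forced, being the only possible configuration at the top by Definition \ref{def:gk}) splits $T$ into an ordered pair $(S, R)$ with $S \in \mathcal{T}_s$, $R \in \mathcal{T}_t$, and $s + t = k$. Conversely, by Remark \ref{Gk}(2), the pairs $(S,R)$ with $s \neq t$ each yield two distinct trees in $\mathcal{T}_k$ (corresponding to the two orderings of the children, i.e. the two tree shapes displayed at the end of Definition \ref{def:gk}), while a pair with $s = t$ yields exactly one. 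Under this bijection, the weight multiplies: $\prod_{v \in V_T} b_v^{\delta_{v,T}} = b_k \cdot \left( \prod_{v \in V_S} b_v^{\delta_{v,S}} \right) \left( \prod_{v \in V_R} b_v^{\delta_{v,R}} \right)$, since the multiset of node labels of $T$ is the union of those of $S$ and $R$ together with one extra copy of $k$ (and $k$ itself never appears as a label in $S$ or $R$ because all labels there are at most $s, t < k$). Summing over all trees and grouping by the split $(s, k-s)$, the contribution of the pair $\{s, k-s\}$ with $s < k - s$ is $2 b_k W_s W_{k-s}$, and when $k$ is even the contribution of $s = k/2$ is $b_k W_{k/2}^2$ (counted once). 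Hence
\[
W_k = b_k \left( \epsilon(k) W_{k/2}^2 + \sum_{s=1}^{\lfloor (k-1)/2 \rfloor} 2 W_s W_{k-s} \right).
\]
Invoking the inductive hypothesis $W_s = a_{s,s}$ for $s < k$ (and noting $W_{k/2} = a_{k/2,k/2}$ when $k$ is even), this is exactly the recursion for $a_{k,k}$ recorded in Remark \ref{akk}. Therefore $W_k = a_{k,k}$, completing the induction.

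The main obstacle, and the step requiring the most care, is the bookkeeping in the weight-multiplicativity and the counting of trees: one must verify precisely that the edge-label data in Definition \ref{def:gk} does not contribute extra weight factors (the weights $b_v$ are attached to nodes, not edges, so edge labels are merely constraints), that no label $\geq k$ can occur in the subtrees so that the $b_k$ factor appears exactly once, and that the $s \neq t$ versus $s = t$ case distinction in Remark \ref{Gk}(2) correctly reproduces the $2$ and the $\epsilon(k)$ in the recursion. A secondary point to check is that the range $\sum_{s=1}^{\lfloor (k-1)/2 \rfloor}$ together with the $\epsilon(k)$ term indeed enumerates each unordered split $\{s, k-s\}$ exactly once, which matches the symmetric convention used in Remark \ref{akk}. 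Once these combinatorial identifications are nailed down, the algebra is immediate from the definition of $b_k$.
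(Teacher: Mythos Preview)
Your proposal is correct and follows essentially the same approach as the paper's proof: strong induction on $k$, with the base case $k=1$ and the inductive step combining the recursion for $a_{k,k}$ from Remark \ref{akk} with the root-removal decomposition of trees in $\mathcal{T}_k$ from Remark \ref{Gk}(2). Your write-up is in fact more careful than the paper's about the weight-multiplicativity and the $s=t$ versus $s\neq t$ bookkeeping, but the underlying argument is the same.
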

  \begin{proof}
We will prove the lemma by induction on \( k \). We begin with the base case when \( k = 1 \). In this case, \( \mathcal{T}_1 \) contains only one ordered tree \( T \) with a single vertex labeled \( v = 1 \), so \( V_T = \{1\} \) and \( \delta_1 = 1 \). This gives:

\[
a_{1,1} = b_1 = \sum_{T \in \mathcal{T}_1} \prod_{v \in V_T} b_v^{\delta_{v,T}},
\]

which proves the base case.

Now, assume the statement holds for any $l<k$. We aim to prove it for \( k \). By Remark \ref{akk}, we have:

\[
a_{k,k} = b_k \left( \epsilon(k) a_{\frac{k}{2}, \frac{k}{2}}^2 + 2 \sum_{s=1}^{\left\lfloor \frac{k-1}{2} \right\rfloor} a_{s,s} a_{k-s, k-s} \right).
\]

Since \( \frac{k}{2} \), \( s \), and \( k - s \) are all strictly less than \( k \), we can apply the induction hypothesis:

\[
a_{k,k} = \epsilon(k) \left( \sum_{T \in \mathcal{T}_{\frac{k}{2}}} \prod_{v \in V_T} b_v^{\delta_{v,T}} \right)^2 b_k + 2 \sum_{s=1}^{\left\lfloor \frac{k-1}{2} \right\rfloor} \sum_{T \in \mathcal{T}_s} \sum_{R \in \mathcal{T}_{k-s}} \prod_{v \in V_T} \prod_{w \in V_R} b_v^{\delta_{v,T}} b_w^{\delta_{w,R}} b_k.
\]

The induction step is then concluded by applying Remark \ref{Gk}, (2).
  \end{proof}

\begin{remark}
    Note that \( b_v \) represents the weight of a node labeled \( v \) in the tree \( T \), and \( \prod_{v \in V_T} b_v^{\delta_{v,T}} \) is the total weight of the tree \( T \).
\end{remark}

  We use the formula in Proposition \ref{wc} to compute $a_{k,k}$ for $k \in \{ 1, 2, 3, 4 ,5\}$.
 \begin{example} 
We have $$a_{11}=b_1$$
$$a_{22}=b_1^2b_2$$
$$a_{33}=2b_1^3b_2b_3$$
$$a_{44}=b_1^4b_2^2b_4+2^2b_1^4b_2b_3b_4$$
$$ a_{5,5}=2b_1^5b_2^2b_4b_5+2^3b_1^5b_2b_3b_4b_5+2^2b_1^5b_2^2b_3b_5$$
where \( b_1 = -1 \) and \( b_v = 2^{-2} \left( 2^{2(v-1)} - 1 \right)^{-1} \) for all \( v > 1 \).
 \end{example}


	
\bibliographystyle{Abbrv}

 \end{document}